\newcommand{\nc}{\newcommand}
\newcommand{\rnc}{\renewcommand}
\nc{\R}{\mathbb R}
\nc{\C}{\mathbb C}
\nc{\N}{\mathbb N}
\nc{\Z}{\mathbb Z}
\nc{\Q}{\mathbb Q}
\rnc{\P}{\mathbb P}
\nc{\F}{\mathbb F}
\nc{\Frac}{\mathrm{Frac}}
\rnc{\O}{\mathcal O}
\nc{\Tr}{\mathrm{Tr}}
\nc{\rmU}{\mathrm{U}}
\nc{\rank}{\mathrm{rank}}
\newcommand\cA{{\mathcal A}}
\newcommand\cD{{\mathcal D}}
\newcommand\cE{{\mathcal E}}
\newcommand\cF{{\mathcal F}}
\newcommand\cG{{\mathcal G}}
\newcommand\cH{{\mathcal H}}
\newcommand\cJ{{\mathcal J}}
\newcommand\cM{{\mathcal M}}
\newcommand\cN{{\mathcal N}}
\newcommand\cV{{\mathcal V}}
\newcommand\cX{{\mathcal X}}
\newcommand\cZ{{\mathcal Z}}
\newcommand\cGamma{{\mathit \Gamma}}
\newcommand\bE{{\mathbb E}}
\newcommand\bH{{\mathbb H}}
\newcommand\bL{{\mathbb L}}
\newcommand\bV{{\mathbb V}}
\newcommand\bT{{\mathbb T}}
\newcommand{\E}{\mathbb{E}}
\newcommand{\codim}{\mathrm{codim}}
\renewcommand{\P}{\mathbb P}
\theoremstyle{plain}
\newtheorem{theorem}{Theorem}[section]
\newtheorem{proposition}[theorem]{Proposition}
\newtheorem{corollaire}[theorem]{Corollary}
\newtheorem{corollary}[theorem]{Corollary}
\theoremstyle{definition}
\newtheorem{definition}[theorem]{Definition}
\theoremstyle{remark}
\newtheorem{remarque}[theorem]{Remark}
\newtheorem{example}[theorem]{Example}
\def\CC{\mathbb{C}}
\def\QQ{\mathbb{Q}}
\def\E{\mathcal{E}}
\def\ZZ{\mathbb{Z}}
\def\RR{\mathbb{R}}
\def\F{\mathcal{F}}
\theoremstyle{plain}
\author{Matt Kerr}
\address{Department of Mathematics, Washington University in St.~Louis, 1 Brookings Drive, St.~Louis, MO 63130, USA}
\email{matkerr@wustl.edu}
\author{Salim Tayou}
\address{Department of Mathematics, Dartmouth College, 29 N. Main Street, Hanover, NH 03755, USA}
\email{salim.tayou@dartmouth.edu}
\title{On the torsion locus of the Ceresa normal function}
\begin{document}

\begin{abstract}
We prove that the positive-dimensional part of the torsion locus of the Ceresa normal function in $\mathcal{M}_g$ is not Zariski dense when $g\geq 3$.  Moreover, it has only finitely many components with generic Mumford-Tate group equal to $\mathrm{GSp}_{2g}$; these components are defined over $\overline \Q$, and their union is closed under the action of $\mathrm{Gal}(\overline \Q/\Q)$. More generally, we study the distribution of the torsion locus of arbitrary admissible normal functions.  
\end{abstract}
\maketitle

\section{Introduction}

Let $g\geq 1$ and let $C$ be a smooth connected complex projective curve of genus $g$. A point $x\in C(\C)$ defines the Abel-Jacobi embedding $C\hookrightarrow J(C)$, where $J(C)$ is the Jacobian of $C$. Let $\iota\colon J(C)\rightarrow J(C)$ be the involution $y\mapsto -y$. Then the cycle $Z_C:=[C]-[\iota(C)]\in \mathrm{CH}^{g-1}(J(C))$ is a homologically trivial cycle on $J(C)$, and has a well defined image in $\mathrm{Griff}_1(J(C))$, the group of homologically trivial cycles modulo the group of algebraically trivial cycles (of dimension one). A famous theorem of Ceresa \cite{ceresa-annals} states that for $g\geq 3$, a very general smooth projective curve $C$ of genus $g$ has non-torsion Ceresa cycle in $\mathrm{Griff}_1(J(C))$. 

One can also consider the Abel-Jacobi class of $Z_C$ in the intermediate Jacobian associated to the primitive cohomology $V_C:=H^{2g-3}_{\text{pr}}(J(C),\Z(g-1))$.  Doing this over all of $\cM_g$ produces an admissible normal function $\nu\in \mathrm{ANF}_{\cM_g}(\cV)$, where $\cV$ is the VHS over $\cM_g$ with fiber $V_C$ over $[C]\in \cM_g$.  Informally, this is the graph of $[C]\mapsto \mathrm{AJ}_{J(C)}(Z_C)$.  Ceresa's proof shows that this normal function is nontorsion.

In this note, we prove the following theorem (see \S\ref{S4.1}).

\begin{theorem}\label{thm-cer}
Let $g\geq 3$ and let $H_g$ denote the hyperelliptic locus in $\mathcal{M}_g$. There exists a finite set of subvarieties $T_i$ of dimension $\leq 2g-2$, and a finite set of preimages of strict Shimura subvarieties $X_j$ of $\mathcal{A}_g$, such that if $U$ is the complement of $(\bigcup_{i}T_i)\cup (\bigcup_{j}X_j) \cup H_g$ in $\mathcal{M}_g$, then the locus in $U(\C)$ where $\nu$ is torsion is a countable set of points. Moreover, any positive dimensional component of the torsion locus of $\nu$ with Mumford-Tate group $\mathrm{GSp}_{2g}$ is defined over $\overline \Q$. 
\end{theorem}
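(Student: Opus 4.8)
The plan is to combine two largely orthogonal inputs: a Hodge-theoretic mechanism that constrains \emph{where} a torsion locus can be positive-dimensional, and an arithmetic descent argument that pins down the field of definition of those components.

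\textbf{Step 1: reduce to a statement about the singularities of $\nu$ and the monodromy.} First I would recall that for an admissible normal function $\nu\in\mathrm{ANF}_{\cM_g}(\cV)$, the torsion locus is the union of the zero loci of the sections $N\cdot\nu$ for $N\in\Z_{>0}$, each of which is an analytic subvariety; so a positive-dimensional component gives a positive-dimensional analytic (hence, by the earlier general results on admissible normal functions, algebraic) subvariety $Z\subset\cM_g$ on which $\nu$ is torsion. The first task is to understand the Hodge-theoretic obstruction to $\nu|_Z$ being torsion. By Ceresa's theorem the generic fiber of $\cV$ has full monodromy $\mathrm{GSp}_{2g}$, and the Ceresa class is nonzero there; the claim is essentially that $\nu|_Z$ can only be torsion if the generic Mumford--Tate group along $Z$ drops, \emph{or} $Z$ sits over a special locus (the hyperelliptic locus, where the Ceresa class is already torsion by the work of Harris/Beauville, or a Shimura-type locus where the variation degenerates), \emph{or} $Z$ has small dimension. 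This is where the subvarieties $T_i$, $X_j$ and the locus $H_g$ in the theorem statement come from. I expect the bookkeeping here — matching each ``exceptional'' possibility to one of the three types of locus, and the dimension bound $\dim T_i\le 2g-2$ — to be the combinatorial heart of the argument, but not the main obstacle.

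\textbf{Step 2: the field-of-definition statement.} Now suppose $Z$ is a positive-dimensional component of the torsion locus of $\nu$ contained in $U$, so that $Z$ avoids all the exceptional loci and the generic Mumford--Tate group along $Z$ is $\mathrm{GSp}_{2g}$. The strategy is a standard Galois-conjugation argument, but it requires knowing that the whole package — the moduli space $\cM_g$, the variation $\cV$, \emph{and} the normal function $\nu$ — is defined over $\overline\Q$ (indeed over $\Q$). For $\cM_g$ and $\cV$ this is classical. For $\nu$: the Ceresa cycle $Z_C=[C]-[\iota(C)]$ is an algebraic cycle defined over the field of definition of $(C,x)$, and the Abel--Jacobi map on a family of smooth projective varieties defined over a number field respects the Galois action on the étale realization; more precisely, the normal function $\nu$ underlies a motivic/ $\ell$-adic incarnation (an extension of $\ell$-adic sheaves on $\cM_{g,\overline\Q}$) that is Galois-equivariant. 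Granting this, for $\sigma\in\mathrm{Gal}(\overline\Q/\Q)$ the conjugate $Z^\sigma$ is again a component of the torsion locus of $\nu$ with generic Mumford--Tate group $\mathrm{GSp}_{2g}$ (the Mumford--Tate group is preserved because the monodromy representation, being topological, is unchanged and the condition ``full'' is Galois-stable). By Step 1 there are only finitely many such components, so the $\mathrm{Gal}(\overline\Q/\Q)$-orbit of $Z$ is finite, and hence $Z$ is defined over $\overline\Q$; moreover the union $\bigcup Z$ of all such components is $\mathrm{Gal}(\overline\Q/\Q)$-stable, which also gives the last assertion of the abstract.

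\textbf{Main obstacle.} The crux is Step 1: producing, from the hypothesis that $\nu|_Z$ is torsion and $\mathrm{MT}$ is generic along $Z$, an actual contradiction unless $Z$ is small or exceptional. The natural tool is the theory of singularities of admissible normal functions together with a Nori-type or Deligne--Beilinson argument: a torsion normal function restricted to $Z$ forces a nontrivial class in $H^1$ of a local system on $Z$ to vanish, which one plays off against the nonvanishing guaranteed by Ceresa's theorem and the fullness of the monodromy. Controlling exactly which degenerate loci (over the boundary of $\cM_g$, where admissibility and the limit mixed Hodge structure enter) can host such a $Z$, and extracting the clean dimension bound $2g-2$, is the delicate part; I would expect to lean on the earlier general results in the paper about the distribution of torsion loci of admissible normal functions to do precisely this, with the Ceresa-specific input (hyperelliptic locus, monodromy computation) plugged in at the end.
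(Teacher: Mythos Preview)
Your structural decomposition is right --- the theorem does reduce to the paper's general result (Theorems~\ref{main-intro} and~\ref{main}) with Ceresa-specific inputs, plus a separate field-of-definition argument --- but the mechanism you propose for Step~1 misses the key idea.

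You suggest attacking Step~1 via ``singularities of admissible normal functions together with a Nori-type or Deligne--Beilinson argument,'' starting from the observation that torsion of $\nu|_Z$ kills a class in $H^1$ of a local system on $Z$.  That observation is just the statement that the topological invariant $[\nu|_Z]$ is torsion, which is immediate and by itself imposes no constraint on $Z$; there is no clear way to extract \emph{finiteness} from it.  The paper's actual mechanism is completely different: one regards $\nu$ as an admissible VMHS $\cE_{\nu}$ and checks that every positive-dimensional torsion component $Z$ is \emph{monodromically atypical} for $\cE_{\nu}$ (the period map for $\cE_{\nu}|_Z$ factors through a torsion section of the Jacobian fibration, so $\mathrm{codim}_{\cM_g}(Z)<\mathrm{rk}(\cJ(\cV))$; here one uses that $\cV$ has level $3>1$, i.e.\ $\cF^{-1}\cV\neq\cV$).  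One then invokes the geometric Zilber--Pink result of Baldi--Urbanik (Theorem~\ref{baldi-urbanik}): maximal atypical subvarieties for a VMHS are governed by finitely many sub-Hodge data $(\cG_i,\cD_i,\cN_i)$.  Unwinding what these data can look like gives the dichotomy: each component either lies in a Hodge-locus component (the preimage of a proper Shimura subvariety of $\cA_g$) or is one of finitely many torsion components.  No boundary analysis and no Nori-type argument enter.  The bound $\dim T_i\le 2g-2$ is not combinatorics either; it is a citation of Colombo--Pirola \cite{CP} on the rank of the infinitesimal invariant of the Ceresa normal function.

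For Step~2 your Galois-orbit argument is a reasonable alternative once finiteness is established, but the paper proceeds differently and avoids $\ell$-adic realizations.  It observes that the flat bundle underlying $\cE_{\nu}$ is defined over $\QQ$ (since the Ceresa cycle is), and applies the result of Klingler--Otwinowska--Urbanik (Theorem~\ref{galois}): weakly special, weakly non-factor subvarieties for such a local system are defined over $\overline\QQ$, with Galois conjugates of the same type.  One then checks directly that a torsion component $Z$ with generic Mumford--Tate group $\mathrm{GSp}_{2g}$ is weakly special (its monodromy group $\cH_Z$ for $\cE_{\nu}$ drops to that of $\cV$) and weakly non-factor (since $G^{der}$ is its own normalizer in the extension $\cH$ of $G^{der}$ by $V$).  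This route does not require separately verifying that Galois conjugation preserves the torsion condition.
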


Hain \cite{Hain-Ceresa} and Gao--Zhang \cite[Theorem 1.3 (ii)]{gao-zhang} have, 
independently and with different methods, also constructed a Zariski dense open subset $U$ of $\mathcal{M}_g$ such that there exists at most countably many points in $U(\C)$ where the Ceresa normal function is torsion.

\begin{remarque}
The locus of curves $C$ where $Z_C$ itself is torsion is contained in the torsion locus of $\nu$. In particular, by the standard spreading-out argument, the locus in $U$ of curves $C$ where $Z_C$ is torsion is a countable subset of $U(\overline{\QQ})$.  We note  that a similar result have also been independently obtained by S-W.~Zhang and Z. Gao \cite[Theorem 1.3-(i)]{gao-zhang}. 
\end{remarque}

Theorem \ref{thm-cer} is a particular instance of the following more general theorem which describes the distribution of the torsion locus of normal functions, see \Cref{main} for the most general version and \S\S\ref{S4.2}-\ref{S4.3} for applications to other normal functions.

\begin{theorem} \label{main-intro}
Let $\mathcal{M}$ be a smooth complex quasi-projective variety, and $\mathcal{V}$ be a non-isotrivial $\QQ$-simple $\ZZ$-VHS of negative weight on $\mathcal{M}^{\text{an}}$ with simple adjoint Mumford--Tate group $G$. Let $\phi\colon \mathcal{M}^{\text{an}}\to X=\Gamma\backslash D=\Gamma\backslash G_{\RR}/K$ denote the induced period map.  We assume that $D$ is a Hermitian symmetric domain 
and $X$ a connected Shimura variety, that $\phi$ is quasi-finite onto its image \textup{(}which is necessarily Hodge-generic in $X$\textup{)}, and that either $\cV\neq \cF^{-1}\cV$ or $\dim(\cM)\leq \tfrac{1}{2}\,\mathrm{rk}(\cV)$.

Let $\nu\in\mathrm{ANF}_{\mathcal{M}}(\mathcal{V})$ be a nontorsion admissible normal function, and write $\mathrm{TL}_{\text{pos}}(\nu)$ for the union of the \textup{(}countably many\textup{)} positive-dimensional components of its torsion locus.

Then $\mathcal{T}:=\overline{\mathrm{TL}_{\text{pos}}(\nu)}^{\text{Zar}}\subset \mathcal{M}$ is a proper algebraic subvariety.  Moreover, any component of $\mathcal{T}$ is either contained in the $\phi$-preimage of a proper locally symmetric subvariety of $X$, or is a component of $\mathrm{TL}_{\text{pos}}(\nu)$. 
\end{theorem}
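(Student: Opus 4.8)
The plan is to combine the Hodge-theoretic rigidity of normal functions with point-counting/definability results in the spirit of the Pila--Wilkie and Ax--Schanuel circle. First I would invoke the admissibility of $\nu$ to view it as a section of a torsor under the intermediate Jacobian bundle $J(\cV)$ over a smooth compactification, and record that each component of the torsion locus $\mathrm{TL}(\nu)$ is an algebraic subvariety (this follows from Saito's theory of admissible normal functions together with the algebraicity of the zero locus, e.g.\ via Brosnan--Pearlstein). The countability of the components is then automatic. The real content is to show that, after excising the $\phi$-preimages of proper locally symmetric subvarieties, only finitely many positive-dimensional components survive and their union is not Zariski dense. Since $X=\Gamma\backslash D$ is a connected Shimura variety and $\phi$ is quasi-finite onto a Hodge-generic image, the hypothesis $\cV\neq\cF^{-1}\cV$ or $\dim(\cM)\le\tfrac12\operatorname{rk}(\cV)$ ensures that the normal function does not "come from geometry over $X$" in a trivial way; concretely it guarantees that the associated extension VHS $\widetilde{\cV}$ (the weight-filtered Hodge structure with $\cV$ as a quotient and $\Z(0)$ as a subobject) has adjoint Mumford--Tate group strictly larger than $G$ on a Hodge-generic point, so the normal function genuinely twists the period map.

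Next I would set up the monodromy/period map for $\widetilde{\cV}$. Its period domain $\widetilde{D}$ fibers over $D$ with fibers the intermediate Jacobian (a complex vector group mod lattice after passing to the associated graded), and $\widetilde\Gamma\backslash\widetilde{D}$ carries the "bi-extension" structure. A point of $\cM$ lies in $\mathrm{TL}(\nu)$ exactly when the image under the period map $\widetilde\phi$ of $\cM$ lands in the torsion-translate locus — i.e.\ in the preimage of the torsion points of the Jacobian fibers, a countable union of locally symmetric-type subvarieties of $\widetilde\Gamma\backslash\widetilde{D}$. Here I would apply the Ax--Schanuel theorem for mixed Shimura varieties (Gao, building on Mok--Pila--Tsimerman) to the graph of $\widetilde\phi$: any positive-dimensional algebraic component of the torsion locus must be "atypical," hence by the Zilber--Pink-type structure theory (again Gao's work, or Baldi--Klingler--Ullmo in the pure case adapted to the bi-extension) it must be contained in a proper special (= weakly special of the right kind) subvariety of $\widetilde\Gamma\backslash\widetilde{D}$. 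Pulling back to $\cM$, such a component is either contained in $\phi^{-1}$ of a proper locally symmetric subvariety of $X$ (the "degenerate" case, which we are allowed to throw into $\cT$), or else its image under $\widetilde\phi$ is a weakly special subvariety on which the monodromy of $\cV$ is still the full $G$ but the extension class is forced to be constant torsion — and then that component is itself (a component of) $\mathrm{TL}_{\mathrm{pos}}(\nu)$, which is the second alternative in the statement.

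For finiteness and non-density, I would then argue that the weakly special subvarieties arising in the second case are constrained: on each of them $\nu$ restricts to a torsion normal function, so by the rigidity of admissible normal functions over a variety with big monodromy (the infinitesimal argument: the derivative of $\nu$, a class in $H^1(\cM,\cV)$-type cohomology, must vanish, using $\cV\neq\cF^{-1}\cV$ or the dimension bound to kill the relevant Hodge-theoretic obstruction) the Mumford--Tate group along such a component cannot be all of $G=\mathrm{GSp}$ unless the component is isolated in a strong sense — this is where the hypothesis on $\cV$ does the real work, preventing a continuous family of torsion values with maximal monodromy. A Noetherian/compactness argument (the total space of candidate weakly special subvarieties is of bounded complexity, by o-minimality of the period map — Bakker--Klingler--Tsimerman) then yields finiteness of the $\mathrm{GSp}$-components and hence that $\cT$ is a proper subvariety. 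The main obstacle, and the step I expect to require the most care, is the rigidity/vanishing input: showing that a positive-dimensional torsion locus with full Mumford--Tate group is incompatible with the non-degeneracy hypothesis on $\cV$, i.e.\ correctly identifying the Hodge-theoretic obstruction class and proving it is nonzero; everything else is assembling known Ax--Schanuel and o-minimality machinery.
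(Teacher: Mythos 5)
Your overall strategy is the paper's strategy in outline: view $\nu$ as an admissible mixed VHS $\cE_\nu$, pass to its mixed period map, and use an atypicality/Zilber--Pink-type finiteness statement to constrain the positive-dimensional torsion components, getting the dichotomy ``inside a proper special locus of $X$'' versus ``a component of $\mathrm{TL}_{\text{pos}}(\nu)$.'' But the step you flag as ``the main obstacle'' --- the rigidity/vanishing input tied to $\cV\neq\cF^{-1}\cV$ or $\dim(\cM)\le\tfrac12\mathrm{rk}(\cV)$ --- is misidentified, and as written it creates a genuine gap. You use the hypothesis to claim that $\cE_\nu$ has strictly larger adjoint Mumford--Tate group than $G$; but that is simply a consequence of $\nu$ being nontorsion together with $\QQ$-simplicity of $\cV$ (cf.\ Corollary \ref{cor-NF}(iii) and the standard fact that the unipotent radical of the mixed monodromy group is the Zariski closure of the topological invariant). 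You then want to deduce some ``rigidity'' forcing positive-dimensional $\mathrm{GSp}$-torsion loci to be isolated, via a vanishing of $\delta\nu$. This is not what happens, and $\delta\nu$ does \emph{not} vanish. The actual role of the hypothesis is to make any positive-dimensional component $T$ of the torsion locus \emph{monodromically atypical} for $\cE_\nu$ by a codimension count: on $T$ the mixed monodromy period map factors through a torsion section of codimension $\mathrm{rk}(\cJ(\cV))$; under (i), Griffiths transversality forces $\mathrm{rank}(\delta\nu|_T)<\mathrm{rk}(\cJ(\cV))$ and hence $\mathrm{codim}_\cM(T)<\mathrm{rk}(\cJ(\cV))$; under (ii), $\mathrm{codim}_\cM(T)<\dim\cM\le\mathrm{rk}(\cJ(\cV))$ trivially. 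That is an inequality of codimensions, not an obstruction-class vanishing, and it is considerably lighter than the argument you anticipate needing.

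The second, smaller divergence is that the precise finiteness input should be named: the paper does not assemble it from Ax--Schanuel plus o-minimality plus a ``bounded complexity'' heuristic. It uses the Baldi--Urbanik theorem (Theorem \ref{baldi-urbanik}), which packages exactly the needed statement for admissible VMHS: a \emph{finite} list of triples $(\cG_i,\cD_i,\cN_i)$ controlling every maximal monodromically atypical subvariety of positive period dimension. After that, the classification is a case analysis on whether $G_i^{\mathrm{der}}$ is proper in $G^{\mathrm{der}}$ (preimage of a proper locally symmetric subvariety of $X$) or equal (a component of $\mathrm{TL}(\nu)$ with torsion order bounded uniformly, using $\QQ$-simplicity to rule out intermediate $\cN_i\cap V$). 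Mixed Ax--Schanuel is the engine behind Baldi--Urbanik, but you do not invoke it separately. The Manin--Mumford step you might expect from general ``mixed Shimura'' intuition is only needed when $\cV$ has isotrivial factors --- i.e.\ in the general Theorem \ref{main} --- and drops out entirely in the $\QQ$-simple case you are handling.
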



Theorem 1.3 has also been independently obtained by Gao--Zhang, see \cite[Corollary 1.9]{gao-zhang}.

\begin{remarque}
If $\cV$ and $\nu$ arise from a family of varieties and a family of cycles thereon, with both (as well as $\cM$) defined over a number field, then these latter components of $\mathrm{TL}_{pos}(\nu)$ (not contained in the Hodge locus of $\cV$) are defined over $\overline{\QQ}$; see Theorem \ref{galois}.
\end{remarque}

We also prove a sufficient condition for the analytic density of the torsion locus, see \Cref{density} for the most general version of the following theorem.
\begin{theorem}\label{equidistribution-intro}
Assume that $\cV$ is a  $\QQ$-simple $\ZZ$-VHS of weight $-1$ and level\footnote{Recall that the \emph{level} of a Hodge structure $V$ (or variation thereof) is the difference of the largest and smallest $p$ for which $\mathrm{Gr}_F^p V\neq \{0\}$.} $1$ over $\mathcal{M}$, such that its period map is quasi-finite and its generic adjoint Mumford-Tate group is simple. We assume furthermore that $\cV$ has rank $2r\leq 2\dim(\mathcal{M})$. Let $\nu\in\mathrm{ANF}_{\mathcal{M}}(\mathcal{V})$ be a nontorsion admissible normal function. Then $\mathrm{TL}(\nu)
 $ is analytically dense in $\mathcal{M}$ and equidistributed with respect to the Betti form $\omega_{Betti}$, i.e., for every $U\subset \mathcal{M}$ relatively compact subset with zero measure boundary and for every differential form $\alpha$, we have:
 \[\int_{\{x\in U|\, n\cdot\nu(x)=0\}}\alpha \underset{n\rightarrow \infty}{\sim} \frac{n^{2r}}{\mathrm{vol}(\cJ(V_\R))}\cdot\int_{U}\alpha\wedge \omega_{Betti}^r~.\] 
\end{theorem}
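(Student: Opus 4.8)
The plan is to recognise the situation as equidistribution of torsion in an abelian scheme, relative to the section cut out by $\nu$. Since $\cV$ has weight $-1$ and level $1$, the intermediate Jacobian bundle $\pi\colon\cJ(\cV)\to\cM$ is a polarized abelian scheme, $\nu$ defines a holomorphic section $s=s_\nu\colon\cM\hookrightarrow\cJ(\cV)$, and $\{x\in\cM\mid n\cdot\nu(x)=0\}=s^{-1}(\cJ[n])$ is the preimage of the $n$-torsion subscheme. On the Zariski-open locus $\cM^\circ\subset\cM$ where the Betti map attached to $s_\nu$ has maximal rank $2r$, this preimage is a complex-analytic cycle $Z_n$ of pure codimension $r$; any components of excess dimension are supported in the fixed proper subvariety $\cM\setminus\cM^\circ$ and do not contribute to $\int_{\{n\cdot\nu=0\}\cap U}\alpha$ for degree reasons, so this integral equals the pairing $\langle[Z_n],\mathbf 1_U\,\alpha\rangle$ of the positive closed $(r,r)$-current of integration $[Z_n]$ against $\mathbf 1_U\,\alpha$. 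Thus the statement reduces to the weak convergence of currents on $\cM$
\[
\tfrac1{n^{2r}}\,[Z_n]\ \longrightarrow\ \tfrac1{\mathrm{vol}(\cJ(V_\R))}\,\omega_{Betti}^{\,r}\qquad(n\to\infty),
\]
after which I would test against $\mathbf 1_U\,\alpha$: the relative compactness of $U$ bounds the masses of $\tfrac1{n^{2r}}[Z_n]$ uniformly on compacta (their cohomology classes converge, the corrections being $O(n^{-1})$), and the zero-measure-boundary hypothesis permits replacing smooth test forms by the discontinuous $\mathbf 1_U\,\alpha$. The analytic density of $\mathrm{TL}(\nu)=\bigcup_n\{n\cdot\nu=0\}$ will then follow automatically, since the limit $\omega_{Betti}^{\,r}$ has strictly positive integral over every nonempty open set once $\omega_{Betti}$ is known to be big.

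To prove the weak convergence I would work in a simply connected chart $\Omega\subset\cM^{\mathrm{an}}$. The flat Betti trivialization identifies $\cJ(\cV)|_\Omega\cong\Omega\times(V_\R/V_\Z)$ (the complex structure on the fibre varying through the period map), under which $\cJ[n]|_\Omega$ becomes the disjoint union of the $n^{2r}$ ``horizontal'' graphs $\Omega\times\{t\}$ with $t\in\tfrac1n V_\Z/V_\Z$, and $s_\nu|_\Omega$ becomes the graph of its Betti realization $f\colon\Omega\to V_\R/V_\Z$. Hence for a test form $\beta$ with compact support in $\Omega$,
\[
\tfrac1{n^{2r}}\,\langle[Z_n],\beta\rangle\ =\ \tfrac1{n^{2r}}\!\!\sum_{t\in\frac1n V_\Z/V_\Z}\!\!\int_{f^{-1}(t)}\beta ,
\]
which is a Riemann sum over the grid $\tfrac1n V_\Z/V_\Z\subset V_\R/V_\Z$; as $n\to\infty$ it tends to $\int_{V_\R/V_\Z}\bigl(\int_{f^{-1}(t)}\beta\bigr)\,dt$, and by the coarea formula this is $\int_\Omega\beta\wedge f^{*}(dt)$, where $dt$ is the translation-invariant volume form of mass $1$ on $V_\R/V_\Z$. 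Finally $f^{*}(dt)$ is, globally, $\tfrac1{\mathrm{vol}(\cJ(V_\R))}\,\omega_{Betti}^{\,r}$: in the chart $\omega_{Betti}|_\Omega=s_\nu^*\omega=\sum_{i<j}Q_{ij}\,df^i\wedge df^j$ for $Q$ the polarization in an integral symplectic basis, whence $\omega_{Betti}^{\,r}=r!\,\mathrm{Pf}(Q)\,df^1\wedge\cdots\wedge df^{2r}=\mathrm{vol}(\cJ(V_\R))\,f^{*}(dt)$ with the normalization $\mathrm{vol}(\cJ(V_\R))=\int_{\text{fibre}}\omega_{Betti}^{\,r}$; and since monodromy acts through $\mathrm{Sp}(V_\Z)$, which preserves $dt$, the local identities glue. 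The fibrewise Riemann-sum estimate is uniform over the relatively compact $U$ because the tori, their complex structures and $f$ vary smoothly over the base; and the cleanest way to dispose of the exceptional loci where $f$ fails to submerse is to run the same computation with the positive closed currents $\tfrac1{n^{2r}}[\cJ[n]]$ on $\cJ(\cV)$, testing against smooth forms: these have uniformly bounded mass on compacta, any weak limit is positive closed with the prescribed cohomology class, and it restricts over each fibre to the Haar measure by the classical equidistribution of torsion points on an abelian variety, which pins the limit down uniquely.

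The serious input — and the step I expect to be the main obstacle — is the bigness of $\omega_{Betti}$, equivalently that the Betti map attached to $s_\nu$ has maximal rank $2r$ on a dense Zariski-open $\cM^\circ\subset\cM$. This is where all the hypotheses are consumed: $\nu$ being nontorsion rules out $\omega_{Betti}\equiv 0$, $\cV$ being $\QQ$-simple with simple adjoint Mumford--Tate group together with the quasi-finiteness of the period map prevent $s_\nu(\cM)$ from lying in a proper sub-abelian-scheme or a torsion translate of one, and $2r\le 2\dim\cM$ makes rank $2r$ numerically attainable; in this ``non-degenerate'' regime the results of Corvaja--Demailly--Zannier and of Gao on the generic rank of the Betti map yield that $\omega_{Betti}$ is big. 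This simultaneously supplies the purity of codimension of $Z_n$ over $\cM^\circ$ and the positivity $\int_U\omega_{Betti}^{\,r}>0$ for every nonempty open $U$ needed for density. With this granted, the remainder — the classical torsion-equidistribution on abelian schemes, packaged through positive closed currents, together with the Riemann-sum/coarea computation and a relative-compactness argument — is routine.
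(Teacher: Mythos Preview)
Your proposal is correct and follows essentially the same route as the paper: the key input is that the Betti map $\nu_{\R}$ has generic rank $2r$ (the paper reproves this as Theorem~\ref{generic-rank-betti} via the mixed Ax--Schanuel theorem, whereas you invoke Gao's result as a black box), after which density and equidistribution follow from a local submersion argument. The paper outsources the equidistribution step to \cite[Theorem 3.6]{tayoutholozan}, while you write out the Riemann-sum/coarea computation and package it through positive closed currents, but the content is the same.
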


\subsection{Ideas of the proofs}
The proofs of the previous theorems rely on arithmetic and functional transcendence results in Hodge theory, namely the Manin--Mumford conjecture and recent results on the study of the Hodge locus for variations of mixed Hodge structures proved by Baldi--Urbanik \cite{BU}. The equidistribution part in \Cref{equidistribution-intro} is inspired from the general equidistribution results established in \cite{tayoutholozan} and relies on the computation of the generic rank of normal functions with big monodromy established in \cite{gao-betti-map,gao-mixed-ax-schanuel} and which relies on the Ax--Schanuel Theorem for variations of mixed Hodge structures.

\subsection{Acknowledgement}
The present work started during the session on open problems in the conference ``The Ceresa Cycle in Arithmetic and Geometry'' at ICERM. We thank the organizers for their efforts and ICERM for its hospitality, Hain for an inspiring talk, and Ziyang Gao, Philip Griffiths, Jef Laga, Aaron Landesman, Ari Shnidman and Shou-Wu Zhang for valuable discussions and comments.  M.K.~was supported by NSF grant DMS-2101482, and S.T.~was supported by NSF grant DMS-2302388.



\section{Preliminaries on atypical Hodge loci and normal functions}
\subsection{Weak Hodge loci of variations of mixed Hodge structure}
Let $S$ be a smooth quasi-projective algebraic variety and let $\cE=\{\bE_{\ZZ},W_{\bullet}, \cF^{\bullet},\nabla\}$ be an admissible graded-polarizable variation of mixed Hodge structure over $S$.\footnote{Here $\bE_{\ZZ}$ is a torsion-free $\ZZ$-local system, the weight filtration $W_{\bullet}$ is defined on $\bE:=\bE_{\ZZ}\otimes_{\ZZ}\QQ$ and the Hodge filtration $\cF^{\bullet}$ is defined on $\cE:=\bE_{\ZZ}\otimes_{\ZZ}\mathcal{O}_S$. We make no notational distinction between the holomorphic vector bundle and its sheaf of sections (both are denoted by $\cE$).}  Let $(\cG,\cD_{\cG})$ be the generic mixed Hodge datum, where $\cG$ is the generic mixed Mumford-Tate group and $\cD_{\cG}$ is the mixed Mumford-Tate period domain. 
Let $(\cH,\cD_{\cH})$ be the monodromy datum of the variation:  here $\cH$ is the \emph{geometric monodromy group} of $\cE$, i.e.,~the neutral component of the $\QQ$-Zariski closure of the monodromy group $\cGamma$ associated to $\bE_{\ZZ}$.  Then $\cG$ and $\cH$ are algebraic groups over $\Q$ and by a well-known theorem of Deligne and Andr\'e \cite{Andre}, $\cH$ is a normal subgroup of $\cG^{der}$. Associated to this data, we have a holomorphic period map \[\Phi:S\rightarrow \cGamma\backslash \cD_{\cG}~.\] 
Let $(\overline{\cG},\overline{\cD}_{\overline{\cG}})$ denote the quotient mixed Hodge datum where $\overline{\cG}:=\cG/\cH$ and $\overline{\cD}_{\cG/\cH}$ is the mixed period domain associated to  $\overline{\cG}$. Then the monodromy period domain $\cD_{\cH}$ is isomorphic to a fiber of the projection $\cD_{\cG}\rightarrow \overline{\cD}_{\overline{\cG}}$ and the period map $\Phi$ factors, up to a finite \'etale cover, as $\Phi=\iota\circ \Phi_{\cH}$ where: 
\[S\xrightarrow{\Phi_{\cH}} \cGamma_{\cH}\backslash \cD_{\cH}\xhookrightarrow{\iota} \cGamma\backslash \cD_{\cG}\xrightarrow{\pi} \overline{\cGamma} \backslash \overline{\cD}_{\overline{\cG}}~.\]
We refer to $\Phi_{\cH}$ as the \emph{monodromy period map}.

Let $Y\subseteq S$ be a closed irreducible sub-variety of $S$. Then we have similarly a generic Hodge datum $(\cG_Y,\cD_{\cG_Y})$, monodromy datum $(\cH_Y,\cD^0_{Y})$, and quotient mixed datum $(\overline{\cG_Y},\overline{\cD}_{\overline{\cG_Y}})$. The restriction $\Phi|_Y$ of the period map $\Phi$ to $Y$ factors similarly, up to finite \'etale cover, as the composition of the first two arrows in
\[Y\rightarrow \cGamma_{\cH_Y}\backslash \cD^0_Y\hookrightarrow \cGamma_Y\backslash \cD_{\cG_Y}\xrightarrow{\pi_Y} \overline{\cGamma}_Y \backslash \overline{\cD}_{\overline{\cG_Y}}~.\]

\begin{definition}[\cite{klingler-otwinowska}]
Let $Y\subseteq S$ be a closed irreducible subvariety. We say that: 
\begin{enumerate}
    \item $Y$ has \emph{positive period dimension} if $\Phi(Y)$ is not zero dimensional.
    \item $Y$ is \emph{special} if $Y$ is maximal among closed subvarieties with Mumford-Tate group $\cG_Y$. Equivalently, $Y$ is a component of $\Phi^{-1}(\cGamma_Y\backslash \cD_{\cG_Y})$.
    \item $Y$ is \emph{weakly special} if $Y$ is maximal among closed subvarieties with monodromy group equal to $\cH_Y$. Equivalently, under the period map $\Phi$, $Y$ is a component of $\Phi^{-1}(\pi_Y^{-1}(\{x\}))$ where $x\in \overline{\cGamma}_Y\backslash\overline{\cD}_{\overline{\cG_Y}}$ is a Hodge-generic point.  
\end{enumerate}
\end{definition}



\begin{definition}
    Let $Y$ be a weakly special subvariety. We say that $Y$ is monodromically atypical if
    \[ \codim_{\Phi(S)}\left(\Phi(Y)\right)<\codim_{\cD_{\cH}}\cD^0_{Y}.\]
\end{definition}

Our main tool in this paper is the following result recently established by Baldi and Urbanik \cite[Theorem 7.1]{BU}:
\begin{theorem}\label{baldi-urbanik}
    There exists a finite set  $\Sigma$ of triples $(\cG_i,\cD_{i},\cN_i)_{i\in \Sigma}$, where $(\cG_i,\cD_{i})$ is a sub-Hodge datum of the generic Hodge datum $(\cG,\cD_{\cG})$, $\cN_i$ is a normal subgroup of $\cG_i$ whose reductive part is semisimple, and the following property holds: for each maximal monodromically atypical 
subvariety $Y\subset S$, there exist $i\in \Sigma$ and $y\in \cD_i$ for which $\cD_Y^0=\cN_i(\RR)^+\cN_i(\CC)^u\cdot y$ \textup{(}up to the action of $\cGamma$\textup{)}.
If in addition $Y$ has positive period dimension, then $\cN_i^{ad}$ is a nontrivial algebraic group.
\end{theorem}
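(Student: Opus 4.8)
The plan is to deduce this from the Ax--Schanuel theorem for admissible graded-polarizable variations of mixed Hodge structure together with the o-minimality of mixed period maps (Bakker--Klingler--Tsimerman), running the Noetherian-induction strategy of Bakker--Tsimerman and Klingler as developed in the pure case by Baldi--Klingler--Ullmo and carried to the mixed setting in \cite{BU}; what follows is only a sketch. First I would fix a definable fundamental set for the $\cGamma$-action on $\cD_{\cG}$, so that $\Phi\colon S\to\cGamma\backslash\cD_{\cG}$ is definable in $\R_{\mathrm{an},\exp}$; the compact dual $\check{\cD}_{\cG}$ is algebraic, and the \emph{weak mixed Mumford--Tate subdomains} --- the orbits $\cN(\RR)^+\cN(\CC)^u\cdot y$ attached to a normal subgroup $\cN$ of $\cG'$, for $(\cG',\cD')$ a sub-Hodge datum of $(\cG,\cD_{\cG})$ --- form a single definable family parametrized by the triples $(\cG',\cN,y)$. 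These are exactly the orbits relevant here: by the Deligne--Andr\'e normality theorem recalled above, the monodromy orbit $\cD_Y^0$ of any weakly special $Y$ is the orbit of $\cH_Y\trianglelefteq\cG_Y^{der}$, hence of this shape.

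The key input is the mixed Ax--Schanuel theorem (see \cite{gao-mixed-ax-schanuel}): for an irreducible algebraic $V\subseteq S\times\check{\cD}_{\cG}$, every component $U$ of the intersection of $V$ with the graph of a local lift of $\Phi$ which is atypical, i.e.\ satisfies $\dim U>\dim V+\dim\Phi(S)-\dim\cD_{\cG}$, has image in $\cD_{\cG}$ contained in a proper weak mixed Mumford--Tate subdomain. The inequality $\codim_{\Phi(S)}\Phi(Y)<\codim_{\cD_{\cH}}\cD_Y^0$ defining a monodromically atypical $Y$ translates into atypicality in this sense, so applying the theorem with $V$ ranging over the (at most countable) collection of maximal monodromically atypical subvarieties $Y\subseteq S$ yields, for each such $Y$, a proper normal subgroup $\cN_Y\trianglelefteq\cG'$ of $\cG'$ for a suitable sub-Hodge datum $(\cG',\cD')$, with $\cD_Y^0=\cN_Y(\RR)^+\cN_Y(\CC)^u\cdot y_Y$. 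Since $\cN_Y$ differs only by a stabilizer from the monodromy group of the restricted variation $\cE|_Y$, its reductive part is semisimple, by Deligne's semisimplicity theorem applied to the graded pieces.

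To obtain the \emph{finite} set $\Sigma$, I would then run the Noetherian induction on $S$: after the previous step the union of the maximal monodromically atypical $Y$ has been pushed into a definable family of algebraic subvarieties, and o-minimal finiteness --- the mechanism behind the Baldi--Klingler--Ullmo geometric Zilber--Pink theorems, to the effect that a definable set is met maximally by only finitely many members of such a family, up to the ambient group action --- shows that the $\cN_Y$ fall into finitely many $\cGamma$-conjugacy classes; choosing a representative $(\cG_i,\cD_i,\cN_i)$ in each gives $\Sigma$. For the final assertion, suppose $Y$ has positive period dimension but $\cN_i^{\mathrm{ad}}$ is trivial; then the reductive part of $\cN_i$ is a torus (by triviality of $\cN_i^{\mathrm{ad}}$) and semisimple (by the previous paragraph), hence trivial, so $\cN_i$ is unipotent and $\cD_Y^0$ lies inside a single fiber of the projection $\cD_{\cG}\to\cD_{\cG^{\mathrm{red}}}$ to the pure part; a direct comparison of the atypicality inequalities for $Y$ and for $S$, together with the maximality of $Y$, then excludes this.

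The step I expect to be the real obstacle is not any single one of these but the combination of (i) pinning down the mixed Ax--Schanuel theorem in exactly the form used above --- the available statements carry various admissibility and graded-polarizability hypotheses that must be matched to the present setting --- and (ii) the bookkeeping in the Noetherian induction: one must verify that monodromic atypicality is inherited by the subvarieties produced at each stage, upgrade the o-minimal finiteness from the ambient groups $\cG_i$ to the normal subgroups $\cN_i$ themselves, and control the interaction of the unipotent radical of $\cG$ with a Levi factor so that $\cD_Y^0=\cN_i(\RR)^+\cN_i(\CC)^u\cdot y$ holds on the nose rather than only up to a unipotent ambiguity.
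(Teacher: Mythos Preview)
The paper does not prove this theorem at all: it is quoted verbatim as \cite[Theorem 7.1]{BU} and used as a black box (``Our main tool in this paper is the following result recently established by Baldi and Urbanik''). So there is no ``paper's own proof'' to compare against.

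That said, your sketch does line up with the actual Baldi--Urbanik argument: mixed Ax--Schanuel plus o-minimality of the period map plus a Noetherian/definable-family finiteness step, in the spirit of Baldi--Klingler--Ullmo. Your self-identified obstacles (matching the precise form of mixed Ax--Schanuel, and the bookkeeping that upgrades finiteness from the ambient $\cG_i$ to the normal subgroups $\cN_i$) are exactly where the work in \cite{BU} lies. One soft spot in your outline is the last assertion: from $\cN_i^{\mathrm{ad}}=\{1\}$ you correctly conclude that $\cN_i$ is unipotent (abelian), but the sentence ``a direct comparison of the atypicality inequalities \ldots\ then excludes this'' hides the actual mechanism. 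A unipotent $\cN_i$ can still give a positive-dimensional orbit $\cD_Y^0$ inside a fiber of $\cD_{\cG}\to\cD_{\cG^{\mathrm{red}}}$, so you need to say why such a $Y$ cannot be \emph{maximal} monodromically atypical with positive period dimension; this uses that the fibers of $\pi$ are themselves algebraic, so atypicality within a fiber is absorbed by a larger atypical subvariety. For the purposes of this paper, though, it is enough to cite \cite{BU}.
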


\begin{remarque}
In the previous theorem, the statement that $\cD_0^Y=\cN_i(\RR)^+\cN_i(\CC)^u\cdot y$ implies (up to the action of $\cGamma$) that $\cN_i$ equals the geometric monodromy group of $\cE|_Y$, and $\Phi|_Y$ factors through $\cGamma_i\backslash\cD_i$,\footnote{Here one can take $\cGamma_i:=\cGamma\cap \cG_i$.} with image in the fiber of $\cGamma_i\backslash\cD_i\overset{\pi_i}{\to} \overline{\cGamma}_i\backslash \overline{\cD}_{\cG_i/\cN_i}$ over $\pi_i(y)$.  In particular, $\cG_i$ need \emph{not} equal the generic Mumford-Tate group of $\cE|_Y$ --- it merely \emph{contains} it, and normalizes the geometric monodromy group.  Hence if $\cN_i=\{1\}$, then $\cG_i$ is unconstrained by $Y$ and no ``finiteness'' result is achieved for subvarieties of period dimension zero.
\end{remarque}

\begin{remarque}
In this section we have used italicized letters for (Mumford-Tate and monodromy) groups associated to mixed variations.  We shall denote in the sequel by roman letters the groups attached to the associated gradeds (which are direct sums of pure variations), a passage denoted by $\cG\mapsto \mathrm{Gr}\,\cG$ in \cite{Andre}.  So for us $G$ denotes the reductive group $\mathrm{Gr}\,\cG$, and $\cG$ is an extension of $G$ by a unipotent group (which records the extension data).
\end{remarque}

\subsection{Brief review of normal functions}

An admissible normal function on $S$ with values in a polarizable VHS $\cV$ (of negative weight) is an extension of admissible\footnote{Admissibility is an asymptotic condition at the boundary $\overline{S}\backslash S$ which is always satisfied for VMHS arising from algebraic geometry.} variations of MHS on $S$ of the form
\begin{equation}\label{eq-NF}
0\to \cV\to \cE\to \ZZ(0)\to 0\,,\end{equation}
where $\ZZ(0)$ is the constant VHS of rank 1 and weight 0.  They constitute a group $\mathrm{ANF}_S(\cV)$ under Baer sum.

They may be viewed as functions in the following way.  There exist local lifts of $1\in \ZZ(0)$: to a holomorphic section $e_F$ of $\cF^0\cE$; and to a section $e_{\ZZ}$ of $\bE_{\ZZ}$. Their difference $e_{\ZZ}-e_F$ lifts to a local section of $\cV$, with ambiguity (arising from choices of $e_F,e_{\ZZ}$) in $\cF^0\cV+\bV_{\ZZ}$.  Passing to the generalized intermediate Jacobian bundle $\cJ(\cV)=\cV/(\cF^0\cV+\bV_{\ZZ})$, this construction produces a well-defined global holomorphic section $\nu\colon S\to \cJ(\cV)$.  (The group structure is given by adding sections.)  We write $\nu\in \mathrm{ANF}_S(\cV)$ and $\cE_{\nu}$ for the corresponding AVMHS.

By Griffiths transversality for $\cE_{\nu}$, a local lift $\tilde{\nu}$ of $\nu$ (to a section of $\cV$ itself) has derivative $\nabla\tilde{\nu}$ a local section of $ \cF^{-1}\cV\otimes\Omega^1_S$ (a property of $\nu$ known as \emph{quasi-horizontality}).  We can also analytically continue $\tilde{\nu}$ along a loop $\gamma\in \pi_1(S,s_0)$ and consider its monodromy $(\gamma-I)\nu\in V$ (the fiber of $\cV$ over $s_0$) which becomes well-defined in $V/(T_{\gamma}-I)V$ (with $T_{\gamma}$ given by the monodromy of the local system).  These notions of derivative and monodromy of $\nu$ are formalized by the \emph{infinitesimal} and \emph{topological invariants} $\delta\nu$ and $[\nu]$, defined as follows.

Consider the exact sequence of complexes of sheaves
\[0\to \cF^0\mathscr{C}^{\bullet}\oplus \bV_{\ZZ}\to \mathscr{C}^{\bullet}\to \frac{\mathscr{C}^{\bullet}}{\cF^0\mathscr{C}^{\bullet}\oplus \bV_{\ZZ}}\to 0\]
where $\mathscr{C}^{\bullet}:=(\cV\otimes \Omega^{\bullet}_S,\nabla)$, $\cF^0\mathscr{C}^{\bullet}:=(\cF^{-\bullet}\cV\otimes\Omega^{\bullet}_S,\nabla)$, and $\bV_{\ZZ}$ is placed in degree zero.  Write $\mathrm{NF}_S(\cV):=\bH^0(S,\tfrac{\mathscr{C}^{\bullet}}{\cF^0\mathscr{C}^{\bullet}\oplus \bV_{\ZZ}})$ for the group of holomorphic quasi-horizontal sections of $\cJ(\cV)$.

\begin{definition}
(i) In the composition
\[\xymatrix{\mathrm{ANF}_S(\cV)\ar @{^(->}[r] \ar [rrd]_{=:(\delta,[\cdot])}& \mathrm{NF}_S(\cV) \ar [r]^{\text{conn.~hom.}\mspace{50mu}} & \bH^1(\cF^0\mathscr{C}^{\bullet}\oplus \bV_{\ZZ}) \ar [d]^{\text{edge hom.}} \\ && \Gamma(S,\cH^1_{\nabla}(\cF^0\mathscr{C}^{\bullet}))\oplus H^1(S,\bV_{\ZZ})}\] $\delta\nu$ is the \emph{infinitesimal invariant} of $\nu\in \mathrm{ANF}_S(\cV)$, and $[\nu]$ the \emph{topological invariant} of $\nu$.

(ii) $\nu$ is said to be \emph{flat} if $\delta\nu=0$, and \emph{monodromically trivial} (resp.~\emph{torsion}) if $[\nu]$ is trivial (resp.~torsion).
\end{definition}

\begin{proposition}\label{prop_nf}
Consider the following conditions on $\nu$:
\begin{itemize}[leftmargin=1cm]
\item [\textup{(a)}] $\nu$ is torsion;
\item [\textup{(b)}] $\nu$ is flat;
\item [\textup{(c)}] $\nu$ is monodromically torsion;
\item [\textup{(d)}] $\bE_{\nu}$ is split \textup{(}there is a lift of $1\in \ZZ(0)$ to $\bE_{\nu}$\textup{)} after multiplying $\nu$ by a nonzero integer or passing to a finite subgroup of $\pi_1(S)$;
\item [\textup{(e)}] the geometric monodromy group $\cH$ of $\cE_{\nu}$ is semisimple \textup{(}has trivial unipotent radical\textup{)}, i.e.,~equals the geometric monodromy group $H$ of $\cV$.
\end{itemize}
In general, \textup{(a)} $\implies$ \textup{(b,c,d,e)}, and \textup{(c,d,e)} are all equivalent.  If the ``fixed part'' $H^0(S,\bV)=\{0\}$, then \textup{(a)} and \textup{(c,d,e)} are equivalent.  If, in addition, the sheaf map $\nabla\colon \cF^0\cV\to \cF^{-1}\cV\otimes \Omega^1_S$ is everywhere injective, then all conditions are equivalent.
\end{proposition}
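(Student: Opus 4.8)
\emph{Proof strategy.}  I would first fix notation.  Choosing a flat splitting of the underlying local system of $\cV$, the monodromy of $\cE_\nu$ takes the form $\rho_\nu(\gamma)=\begin{psmatrix}\rho_V(\gamma)&v_\gamma\\ 0&1\end{psmatrix}$, where $\rho_V$ is the monodromy of $\cV$ and $(v_\gamma)_\gamma$ is a $V_\ZZ$-valued $1$-cocycle whose class in $H^1(\pi_1(S),V_\ZZ)\cong H^1(S,\bV_\ZZ)$ is the topological invariant $[\nu]$; recall $\cH=(\overline{\rho_\nu(\pi_1(S))}^{\mathrm{Zar}})^0$ and $H=(\overline{\rho_V(\pi_1(S))}^{\mathrm{Zar}})^0$, with $H$ reductive (the underlying local system of a polarizable VHS being semisimple), and ``$\nu$ torsion'' in (a) means $n\nu=0$ in $\mathrm{ANF}_S(\cV)$ for some $n\neq 0$.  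The implications (a) $\Rightarrow$ (b,c,d) are then formal: from $n\nu=0$ one gets $\delta\nu=0$ because $n\,\delta\nu=\delta(n\nu)=0$ in the $\C$-vector space $\Gamma(S,\cH^1_\nabla(\cF^0\mathscr{C}^\bullet))$; $[\nu]$ is torsion because $n[\nu]=[n\nu]=0$; and, since $n\nu=0$ means the admissible VMHS $\cE_{n\nu}$ splits, its underlying $\ZZ$-local system splits, which is (d).  Condition (e) is then obtained from (d) through the equivalences below.

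The heart of the argument is the equivalence (c) $\Leftrightarrow$ (d) $\Leftrightarrow$ (e).  For (c) $\Leftrightarrow$ (d): $[\nu]$ is torsion iff $(v_\gamma)$ is a $\Q$-coboundary $v_\gamma=(\rho_V(\gamma)-I)w$ with $w\in V_\Q$, iff conjugating $\rho_\nu$ by $\begin{psmatrix}I&w\\ 0&1\end{psmatrix}$ makes it block-diagonal over $\Q$, iff (clearing denominators, or passing to the finite cover on which $\rho_V$ takes values in $H$ and using that $H^1(S,\bV_\Q)\to H^1(S',\bV_\Q)$ has torsion kernel) the $\ZZ$-local system of $\cE_{n\nu}$ splits for some $n$ (equivalently, it splits on a finite cover) --- which is (d).  The same block-diagonalization gives (c)/(d) $\Rightarrow$ (e): it exhibits $\overline{\rho_\nu(\pi_1(S))}^{\mathrm{Zar}}$ as conjugate to a subgroup of $\overline{\rho_V(\pi_1(S))}^{\mathrm{Zar}}\times\{1\}$, so $\cH=H$.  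The hard part is the converse (e) $\Rightarrow$ (c).  Here I would argue: if $\cH=H$, then the $\Q$-Zariski closure $\widetilde G$ of $\rho_\nu(\pi_1(S))$ inside $V\rtimes H$ has identity component $H$, so $\widetilde G\cap V$ --- a closed, hence connected (characteristic $0$), unipotent normal subgroup of $\widetilde G^{\,0}=H$ --- is trivial; therefore the projection $V\rtimes H\to H$ embeds $\widetilde G$ into $H$ and restricts to an isomorphism $\widetilde G^{\,0}\xrightarrow{\ \sim\ }H$, whose inverse $s\colon H\to\widetilde G\subset V\rtimes H$ is the graph of a regular $1$-cocycle $c\colon H\to V$.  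By the vanishing of rational group cohomology $H^1(H,V)=0$ for the reductive group $H$, $c(h)=(h-I)w$ for some $w\in V_\Q$; and on the finite-index subgroup $\{\gamma:\rho_V(\gamma)\in H\}$ of $\pi_1(S)$ one has $\rho_\nu(\gamma)=s(\rho_V(\gamma))$, hence $v_\gamma=(\rho_V(\gamma)-I)w$ there, so $[\nu]$ restricts to $0$ on a finite cover and $[\nu]\otimes\Q=0$, i.e.\ (c).  The content of this step --- that semisimplicity of the geometric monodromy group of $\cE_\nu$ already forces the monodromy extension to be trivial up to torsion --- rests on both the normal-subgroup structure of $\widetilde G\subset V\rtimes H$ and the vanishing $H^1(H,V)=0$, and I expect it to be the main obstacle.

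It remains to close the cycle under the two extra hypotheses.  If $H^0(S,\bV)=0$, then (c) $\Rightarrow$ (a): replacing $\nu$ by $n\nu$ one may assume $[\nu]=0$, so $\bE_{\nu,\ZZ}$ carries a global flat section $\sigma$ over $1\in\ZZ(0)$; by the theorem of the fixed part for the admissible graded-polarizable VMHS $\cE_\nu$, the group $H^0(S,\bE_\nu)$ carries a MHS sitting in an exact sequence of MHS $0\to H^0(S,\bV)\to H^0(S,\bE_\nu)\to H^0(S,\ZZ(0))=\ZZ(0)$ whose last arrow is onto (its obstruction is $[\nu]=0$, and $\sigma$ hits $1$); the hypothesis makes it an isomorphism $H^0(S,\bE_\nu)\cong\ZZ(0)$, and the corresponding constant sub-VMHS of $\cE_\nu$ splits the extension, so $\nu=0$.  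Finally, if in addition $\nabla\colon\cF^0\cV\to\cF^{-1}\cV\otimes\Omega^1_S$ is everywhere injective, then (b) $\Rightarrow$ (c): now $\cH^0_\nabla(\cF^0\mathscr{C}^\bullet)=0$, so the edge homomorphism $\bH^1(\cF^0\mathscr{C}^\bullet)\to\Gamma(S,\cH^1_\nabla(\cF^0\mathscr{C}^\bullet))$, whose kernel is $H^1(S,\cH^0_\nabla(\cF^0\mathscr{C}^\bullet))$, is injective; writing $\bH^1(\cF^0\mathscr{C}^\bullet\oplus\bV_\ZZ)=\bH^1(\cF^0\mathscr{C}^\bullet)\oplus H^1(S,\bV_\ZZ)$ (the bracketed complex being a direct sum, with $\bV_\ZZ$ in degree $0$), the condition $\delta\nu=0$ forces $\partial\nu=(0,[\nu])$, while the long exact sequence of $0\to\cF^0\mathscr{C}^\bullet\oplus\bV_\ZZ\to\mathscr{C}^\bullet\to\mathscr{C}^\bullet/(\cF^0\mathscr{C}^\bullet\oplus\bV_\ZZ)\to 0$ sends $\partial\nu$ to $0$ in $\bH^1(\mathscr{C}^\bullet)\cong H^1(S,\bV_\C)$; since the induced map on the second summand is the coefficient homomorphism $H^1(S,\bV_\ZZ)\to H^1(S,\bV_\C)$, whose kernel is the torsion subgroup, $[\nu]$ is torsion.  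Together with (a) $\Rightarrow$ (b), all five conditions are then equivalent.
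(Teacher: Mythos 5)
Your proof is correct and follows the same strategy as the paper's sketch: the theorem of the fixed part for (c) $\Rightarrow$ (a), edge-homomorphism injectivity for (b) $\Rightarrow$ (a), the connecting homomorphism for (c) $\Leftrightarrow$ (d), and the Levi decomposition of $\cH$ for (d) $\Leftrightarrow$ (e). You helpfully make explicit the one direction the sketch merely asserts (via ``(d) is the same as $\cH=\mathrm{Gr}\,\cH$''), namely (e) $\Rightarrow$ (c)/(d): semisimplicity of $\cH$ forces $\widetilde{G}\cap V=\{1\}$ so that $\widetilde{G}$ projects isomorphically onto a reductive group, and the vanishing $H^1(H,V)=0$ then trivializes the resulting algebraic section of $V\rtimes H\to H$ up to a $V(\QQ)$-conjugation, which is exactly what converts the group-theoretic condition into a splitting of the local system up to torsion.
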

\begin{proof}[Sketch]
Working $\otimes\QQ$, if $[\nu]=0$ then $\bE_{\nu}=\bV\oplus \QQ$.  If also $H^0(S,\bV)=\{0\}$, then $H^0(S,\bE_{\nu})=\QQ$ and the Theorem of the Fixed Part \cite[(4.19)]{Steenbrink-Zucker} says the latter underlies a (constant) sub-AVMHS of $\cE_{\nu}$.  Since it is of rank one, it can only be of type $(0,0)$, splitting \eqref{eq-NF} and rendering $\nu=0$.  Thus $[\cdot]$ is injective, whence (c)$\implies$(a), when $\bV$ has trivial fixed part.  The additional condition required for (b)$\implies$(a) is to make the edge homomorphism injective.

For the equivalence of (c) and (d), tracing through the definition shows that $[\nu]$ is the image of $1\in H^0(S,\ZZ(0))$ under the connecting homomorphism for the extension 
\begin{equation}\label{eq-LS}
0\to \bV_{\ZZ}\to \bE_{\nu,\ZZ}\to \ZZ(0)\to 0
\end{equation}
of local systems.  Finally, $\cH$ is an extension of $\mathrm{Gr}\,\cH$ by its unipotent radical $U$; (e) says that $U=\{1\}$, and (d) is the same as $\cH=\mathrm{Gr}\,\cH$, so (d)$\iff$(e).
\end{proof}

\begin{definition}\label{defn-gen}
A \emph{projection} of $\nu$ is its image under $\mathrm{ANF}_S(\cV)\to \mathrm{ANF}_S(\cV')$ for any \emph{nontrivial} surjection $\cV\twoheadrightarrow \cV'$ of VHS.  We say that $\nu$ is \emph{generic} if no projection of $\nu$ has torsion topological invariant.
\end{definition}

\begin{corollaire}\label{cor-NF}
\textup{(i)} $\nu$ is generic if and only if the unipotent radical of $\cH$ \textup{(}the geometric monodromy group of $\cE_{\nu}$\textup{)} is isomorphic to $V$.

\textup{(ii)} If $\cV$ has no constant factors, $\nu$ is generic if, and only if, every projection of $\nu$ is nontorsion.

\textup{(iii)} If $\cV$ is irreducible and non-constant, $\nu$ is generic if, and only if, it is nontorsion.
\end{corollaire}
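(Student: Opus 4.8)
The plan is to deduce Corollary~\ref{cor-NF} from Proposition~\ref{prop_nf} applied not to $\cV$ itself but to all its quotients, together with a direct analysis of the unipotent radical $U$ of $\cH$.

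\textbf{Step 1: Identifying $U$ as a subobject of $V$.}  First I would observe that, since $\cE_\nu$ is an extension of $\ZZ(0)$ by $\cV$, the geometric monodromy group $\cH$ sits in the unipotent radical picture: $U:=R_u(\cH)$ acts trivially on $\mathrm{Gr}^W$ and hence is a subgroup of $\Hom(\ZZ(0),\cV)=V$, i.e.\ $U\subseteq V$ as a $\QQ$-vector space stable under $H=\mathrm{Gr}\,\cH$.  Concretely, $U$ is the $\QQ$-span of the monodromy vectors $(T_\gamma-I)\tilde\nu$ for $\gamma\in\pi_1(S)$, which is the smallest $H$-submodule $V'\subseteq V$ such that the pushout of \eqref{eq-LS} along $V\twoheadrightarrow V/V'$ splits as a local system.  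This is exactly the content one needs, and it is essentially a restatement of the equivalence (c)$\iff$(d)$\iff$(e) in Proposition~\ref{prop_nf}, now tracked through an arbitrary quotient.

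\textbf{Step 2: Proof of (i).}  A projection of $\nu$ along $\cV\twoheadrightarrow\cV'$ has topological invariant obtained by pushing forward the class of \eqref{eq-LS}; by Step 1 this class is torsion (i.e.\ the projection is monodromically torsion) precisely when the image of $U$ in $V'$ is zero, equivalently when $\ker(V\to V')\supseteq U$.  So: every nontrivial quotient $\cV'$ of $\cV$ on which the projection has torsion topological invariant corresponds to a nonzero $H$-submodule of $V$ containing $U$; genericity of $\nu$ asserts there is no such $\cV'$, which forces $U=V$ (take $\cV'=\cV/U$ if $U\subsetneq V$).  Conversely if $U=V$ then for any nontrivial quotient $\cV'$ the image of $U$ in $V'$ is all of $V'\neq 0$, so no projection has torsion topological invariant; hence $\nu$ is generic.

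\textbf{Step 3: (ii) and (iii).}  For (ii), when $\cV$ has no constant factors, I want to upgrade ``torsion topological invariant'' to ``torsion'' for projections; this is exactly Proposition~\ref{prop_nf} applied to $\cV'$, since a quotient of a VHS with no constant factors still has trivial fixed part $H^0(S,\bV')=\{0\}$ (a quotient map $\bV\to\bV'$ carries the fixed part onto that of $\bV'$, and any constant sub-VHS of $\bV'$ would pull back—using polarizability, $\bV'$ is also a \emph{sub}-VHS of $\cV$ up to isogeny—to a constant factor of $\cV$).  Then ``$\nu$ generic $\iff$ no projection has torsion topological invariant $\iff$ no projection is torsion.''  For (iii): if $\cV$ is irreducible the only nontrivial quotient is $\cV$ itself, so being generic means $\nu$ (the only projection) has nontorsion topological invariant; since $\cV$ is irreducible and non-constant it has trivial fixed part, so Proposition~\ref{prop_nf} gives torsion $\iff$ monodromically torsion, and the claim follows.

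\textbf{Main obstacle.}  The subtle point is the clean identification in Step 1 of $U=R_u(\cH)$ with the minimal $H$-submodule $V'\subseteq V$ killing the monodromy obstruction, uniformly over all quotients — in particular checking that passing to a quotient $\cV'$ genuinely replaces $U$ by its image in $V'$ (so that $R_u(\cH_{\nu'})=\mathrm{im}(U\to V')$), which is where one must be careful that pushout of the extension of local systems commutes with taking monodromy groups.  The remaining ingredient, needed in (ii), is the ``no constant factors $\Rightarrow$ quotients have trivial fixed part'' reduction, which uses semisimplicity of the category of polarizable $\QQ$-VHS so that a quotient is also (up to isogeny) a direct summand, hence a sub-VHS; everything else is bookkeeping on top of Proposition~\ref{prop_nf}.
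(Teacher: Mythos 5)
Your proposal is correct and follows essentially the same route as the paper: identify the unipotent radical $U=R_u(\cH)$ as an $H$-submodule of $V$, observe that a projection along $\cV\twoheadrightarrow\cV'$ has torsion topological invariant exactly when $U\subseteq\ker(V\to V')$ (so genericity $\iff U=V$, using (c)$\iff$(e) of Proposition~\ref{prop_nf}), and then for (ii)--(iii) upgrade ``monodromically torsion'' to ``torsion'' via the (a)$\iff$(c) equivalence, using that quotients of a polarizable VHS with no constant factors again have trivial fixed part. Your extra care in Step~3 (semisimplicity to pass the no-constant-factor condition to quotients) is a detail the paper leaves implicit but is exactly what is needed; the only small imprecision is your parenthetical description of $U$ as the span of $(T_\gamma-I)\tilde\nu$, which depends on the choice of lift $\tilde\nu$ --- but your intrinsic characterization as the minimal $H$-submodule splitting the pushout of~\eqref{eq-LS} is the correct one and is what your argument actually uses.
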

\begin{proof}
If $\cV$ is simple, then non-semisimplicity of $\cH$ is the same as having $U=V$.  More generally, applying this to projections with $\cV'$ simple and using the equivalence of (c) and (e) in the Proposition shows that genericity is equivalent to $U$ projecting to each $V'$, i.e.,~$U=V$.  For (ii-iii), the fixed part is trivial and we use the resulting equivalence of (a) and (c).
\end{proof}

\begin{remarque}\label{rem-NF}
In the geometric context, normal functions graph Abel-Jacobi mappings in families of cycles and higher cycles.  Given a smooth projective family $\cX\to S$, with $\cV\subset \cH^n(\cX/S)(m)$ and $r=2m-n-1\geq 0$, a primitive\footnote{This means that the restrictions $\mathfrak{Z}|_{X_s}$ to fibers of $\cX\to S$ are homologically trivial, which is a nontrivial condition only for $r=0$. Also note that $\cV$ has negative weight $n-2m=-r-1$.} cycle $\mathfrak{Z}\in \mathrm{CH}^m(\cX,r)\cong_{\otimes \QQ}\mathrm{Gr}_{\gamma}^m K_{r}^{\text{alg}}(\cX)$ produces a normal function $\nu_{\mathfrak{Z}}\in \mathrm{ANF}_S(\cV)$ via \[\nu(s):=\mathrm{AJ}_{X_s}(\mathfrak{Z}|_{X_s})\in \cJ(V_s)=\frac{V_{s,\CC}}{F^0V_{s,\CC}+\bV_{\ZZ}}\cong \mathrm{Ext}^1_{\text{MHS}}(\ZZ(0),V_s)\,.\]
When $r=0$, $\cJ(V_s)$ is a compact complex torus known as an intermediate Jacobian; it is usually\footnote{Exceptions here would include the case where $V_s$ is a CM Hodge structure.} not algebraic if the level of $V_s$ is larger than one.  When $r>0$, it is a noncompact complex torus.
\end{remarque}

By the main theorem of \cite{BP}, the zero locus of $\nu$ is an algebraic subvariety of $S$.  Since the zero-locus of $m\nu$ is the $m$-torsion locus of $\nu$, we get that the torsion locus with torsion order bounded (say, by $N$) is also algebraic.  Thus the torsion locus of $\nu$ is a countable union of algebraic subvarieties.  In the remainder of this paper we aim to improve this statement.

\section{Torsion loci of admissible normal functions}

\subsection{The main theorem}
Let $\mathcal{M}$ be a smooth quasi-projective complex variety and let $\cV=\{\bV_\Z,\cF^\bullet,\nabla\}$ be a non-trivial polarized $\Z$-VHS of negative weight on $\mathcal{M}$. Let $G$ denote the generic Mumford-Tate group of $\cV$ and $\phi\colon \mathcal{M}_{\CC}^{\text{an}}\to X=\Gamma\backslash D=\Gamma\backslash G_{\RR}/K$ be the induced period map.  By construction the image $M:=\phi(\cM)$ is Hodge-generic in $X$; while $X$ is in general not an algebraic variety, we know that $M$ is one by \cite{bbt}.


Let $(H,\cD_H)$ be the monodromy data of $\cV$ and let $\phi_H:\cM\rightarrow X_H:=\Gamma_H\backslash \cD_{H}$ be the monodromy period map.
\begin{definition}
A subvariety $Y\subset \cM$ is said to be of \emph{factorwise positive dimension} with respect to $\phi$, if the projection of $\phi(Y)$ to each simple factor of $X_H$ has positive dimension.\footnote{One may need to take a finite \'etale cover to define these projections.}    
\end{definition}

Let $\cV_0$ be the largest isotrivial summand of $\cV$. Then the variation $\cV$ admits a decomposition: 
\begin{align}\label{decomposition}
\cV=\cV_0\oplus \bigoplus_{\ell\in I} \cV_\ell
\end{align}
where for $\ell\in I$, $\cV_\ell$ are $\Q$-simple non-isotrivial variations. Let $\nu\in\mathrm{ANF}_{\mathcal{M}}(\mathcal{V})$ be a nontorsion admissible normal function. Then $\nu$ admits a decomposition $\nu=(\nu_\ell)_\ell$ which reflects the decomposition in \eqref{decomposition}. Let $V$ be the $\Q$-vector space underlying a fiber of $\cV$, which is also equal to $H^0(\widetilde{\cM},\iota^*\bV_{\CC})$, where $\iota:\widetilde{\cM}\rightarrow \cM$ is the universal cover. 

We can now state the main theorem of this paper. 
\begin{theorem}\label{main} 
Let $\nu\in \mathrm{ANF}
_{\mathcal{M}}(\cV)$ be a generic admissible normal function on $\mathcal{M}$. Let $\mathrm{TL}_{f-pos}(\nu)$ be the \textup{(}countable\textup{)} union of components of the torsion locus of $\nu$
which are of factorwise positive dimension with respect to $\phi$. Assume that at least one of the following conditions holds:
\begin{itemize}[leftmargin=1cm]
    \item [\textup{(i)}] the vector bundle $\cV/\cF^{-1}\cV$ is non-trivial; or
    \item [\textup{(ii)}] $\dim(\mathcal{M})\leq \mathrm{rk}(\cJ(\cV))$.
\end{itemize}
Then 
$\mathcal{T}:=\overline{\mathrm{TL}_{f-pos}(\nu)}^{Zar}$ is a strict algebraic subvariety of $\mathcal{M}$. Moreover, there exist finitely many projections $(\nu_i)_i$ of $\nu$ such that each component of $\mathcal{T}$ is either contained in a component of the Hodge locus for $\phi$, or contained in $\mathrm{TL}_{f-pos}(\nu_i)$.
\end{theorem}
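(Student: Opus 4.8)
The plan is to derive the statement from the Baldi--Urbanik finiteness theorem (\Cref{baldi-urbanik}) applied to the admissible variation of mixed Hodge structure $\cE_\nu$ attached to $\nu$, the crux being to show that the relevant components of the torsion locus are \emph{monodromically atypical} for $\cE_\nu$. First I would fix one irreducible component $Y$ of $\mathrm{TL}_{f-pos}(\nu)$ and argue componentwise. If $\phi(Y)$ is not Hodge-generic in $X$, then $Y$ lies in a component of the Hodge locus of $\phi$ and we are in the first alternative, so assume $\phi(Y)$ is Hodge-generic. Since $\nu|_Y$ is torsion, \Cref{prop_nf} gives that the geometric monodromy group of $\cE_\nu|_Y$ is semisimple and equal to the geometric monodromy group $H_Y$ of $\cV|_Y$. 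Enlarging $Y$ to the weakly special subvariety for $\cE_\nu$ that it generates --- the maximal closed subvariety through $Y$ with $\cE_\nu$-monodromy $H_Y$ --- we may assume $Y$ is weakly special; it stays of factorwise positive dimension (hence of positive period dimension), and, $H_Y$ remaining semisimple, $\nu|_Y$ stays monodromically torsion and so, the fixed part being trivial after using the decomposition~\eqref{decomposition}, torsion. (If $\phi$ is not quasi-finite a standard preliminary reduction over its fibres, which carry only isotrivial data, is made; below I take $\phi$, hence the monodromy period map $\Phi$ of $\cE_\nu$, to be quasi-finite, so $\dim\Phi(Z)=\dim Z$ for all $Z\subseteq\cM$.)

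The heart of the proof is the atypicality of $Y$. By genericity of $\nu$ and \Cref{cor-NF}(i) the unipotent radical of the geometric monodromy group $\cH$ of $\cE_\nu$ equals $V$, so $\cD_\cH$ fibres over $\cD_H$ with fibres of dimension $\mathrm{rk}(\cJ(\cV))=\dim_\C(\cV/\cF^0\cV)$; and because $\cE_\nu|_Y$ splits up to torsion, $\cD^0_Y$ is $\cD_{H_Y}$ placed inside a torsion translate of the zero-section $\cD_H\subset\cD_\cH$, so that
\[
\codim_{\cD_\cH}\cD^0_Y \;=\; \mathrm{rk}(\cJ(\cV))+\codim_{\cD_H}\cD_{H_Y}\;\geq\;\mathrm{rk}(\cJ(\cV)).
\]
Meanwhile $\codim_{\Phi(\cM)}\Phi(Y)=\dim\cM-\dim Y$. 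If (ii) holds this is $\leq\dim\cM-1<\mathrm{rk}(\cJ(\cV))$ since $\dim Y\geq1$, so $Y$ is monodromically atypical. If (i) holds, one instead uses quasi-horizontality of $\nu$: a local lift satisfies $\nabla\tilde\nu\in\cF^{-1}\cV\otimes\Omega^1_\cM$, so along the torsion locus the differential of a local trivialisation of $m\nu$ takes values in $\cF^{-1}\cV/\cF^0\cV$, and a Griffiths-transversality dimension count then shows every component of the torsion locus of $\nu$ has codimension at most $\dim_\C(\cF^{-1}\cV/\cF^0\cV)$ in $\cM$; since $\cV/\cF^{-1}\cV\neq0$ this is $<\mathrm{rk}(\cJ(\cV))$, so again $\codim_{\Phi(\cM)}\Phi(Y)<\codim_{\cD_\cH}\cD^0_Y$. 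I expect this codimension estimate in case (i) to be the main technical obstacle.

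Granting the atypicality, $Y$ is contained in a maximal monodromically atypical subvariety $Y^{\mathrm{max}}$, so by \Cref{baldi-urbanik} there are $i$ in the finite set $\Sigma$ and $y\in\cD_i$ with $\cD^0_{Y^{\mathrm{max}}}=\cN_i(\R)^+\cN_i(\C)^u\cdot y$ up to $\cGamma$, and $\cN_i^{ad}\neq\{1\}$ because $Y^{\mathrm{max}}\supseteq Y$ has positive period dimension. Write $G_i:=\mathrm{Gr}\,\cG_i$ and $N_i:=\mathrm{Gr}\,\cN_i$, a nontrivial semisimple normal subgroup of $G_i$. If $G_i\subsetneq G$, then $Y^{\mathrm{max}}$ lies in the $\phi$-preimage of a proper special subvariety of $X$, i.e.\ in a component of the Hodge locus of $\phi$, and we are in the first alternative. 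If $G_i=G$, then $N_i\trianglelefteq G$, and the sum $\cW_i\subseteq\cV$ of the simple non-isotrivial summands in~\eqref{decomposition} on which $N_i$ acts nontrivially is a nonzero sub-VHS; let $\nu_i\in\mathrm{ANF}_\cM(\cW_i)$ be the associated projection of $\nu$. Unwinding \Cref{baldi-urbanik}, $Y^{\mathrm{max}}$ lies in a single fibre of $\pi_i\circ\Phi\colon\cM\to\overline{\cGamma}_i\backslash\overline{\cD}_{\cG_i/\cN_i}$, and since this fibre contains the torsion component $Y$ of $\nu$ it lies over a torsion point of the target.

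It remains to extract finiteness and strictness. For each $i$ with $G_i=G$, as $Y^{\mathrm{max}}$ varies these fibres cover a subset of the torsion points of the mixed Shimura variety $\overline{\cGamma}_i\backslash\overline{\cD}_{\cG_i/\cN_i}$; by the Manin--Mumford conjecture for such a variety the Zariski closure of that subset is a finite union of torsion subvarieties $A'$, each necessarily proper, for a torsion translate of a non-proper sub-semiabelian variety would force a further projection of $\nu$ to be torsion, contradicting genericity of $\nu$. Each $(\pi_i\circ\Phi)^{-1}(A')$ is then the torsion locus $\{\nu_{i'}=t'\}$ of a further projection $\nu_{i'}$ of $\nu$ at a torsion value $t'$, a strict algebraic subvariety of $\cM$, and there are finitely many of these since $\Sigma$ is finite and Manin--Mumford outputs finitely many $A'$. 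Combining with the finitely many strict subvarieties cut out by the data with $G_i\subsetneq G$ (each a union of components of the Hodge locus of $\phi$), we exhibit $\mathrm{TL}_{f-pos}(\nu)$ inside a finite union of strict subvarieties; hence $\mathcal T=\overline{\mathrm{TL}_{f-pos}(\nu)}^{Zar}$ is a strict algebraic subvariety, and each of its components, being irreducible and inside this finite union, lies either in a component of the Hodge locus of $\phi$ or in $\mathrm{TL}_{f-pos}(\nu_{i'})$ for one of the finitely many projections $\nu_{i'}$ (possibly including $\nu$ itself). Besides the case-(i) codimension estimate, I expect the last step --- making the Manin--Mumford input precise, in particular controlling which torsion points actually occur (those where \emph{all} of $\nu$, not merely its $\pi_i$-part, degenerates along a positive-dimensional locus) so as to rule out the whole parameter space, where genericity of $\nu$ and hypotheses (i)/(ii) re-enter --- to demand the most care.
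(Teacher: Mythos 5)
Your overall architecture --- replace $\nu$ by $\cE_\nu$, argue that each factorwise positive-dimensional torsion component is monodromically atypical via (i) or (ii), feed maximal atypical envelopes into Baldi--Urbanik, split into $G_i\subsetneq G$ (Hodge locus) versus $G_i=G$, and finish with Manin--Mumford --- is the same as the paper's, and the atypicality codimension estimates in both cases (i) and (ii) are essentially correct and match the paper's. But two substantive steps in the last stage are not right, and one auxiliary construction is confused.

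First, before you can invoke Manin--Mumford on the target $\overline{\cGamma}_i\backslash\overline{\cD}_{\cG_i/\cN_i}$ you need it to actually be a (semi-)abelian variety, and this is not automatic: when $G_i=G$ and $N_i=H$ this target is $\cJ(U)$ for a constant sub-Hodge structure $U\subset V_0$, which is a complex torus but only an \emph{algebraic} group if every irreducible piece of $U$ has weight $-1$ level $1$ or weight $-2$ level $0$. The paper proves this is forced, by applying Griffiths transversality to the projection $\nu_{U'}$ of $\nu$ for any higher-level irreducible piece $U'$: since $d\Phi_0'$ lands in $F^{-1}U'_\C/F^0U'_\C\subsetneq U'_\C/F^0U'_\C$, the Zariski closure of the image of $\pi_1(\cM)\to U'_\Z$ is a sub-Hodge structure contained in $F^{-1}$, hence zero by irreducibility, which contradicts genericity of $\nu$. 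Your proof omits this reduction entirely; as written, ``Manin--Mumford for such a variety'' is being applied to an object that is a priori only analytic.

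Second, your way of ruling out the possibility that the Zariski closure of the torsion set in $\Phi_0(\cM)$ is all of $\cJ(U)$ --- ``a torsion translate of a non-proper sub-semiabelian variety would force a further projection of $\nu$ to be torsion, contradicting genericity'' --- does not follow. If $\Phi_0$ is dominant to $\cJ(U)$, the $U$-projection of $\nu$ is \emph{not} torsion, and density of torsion fibres with atypical preimages places no constraint on any projection of $\nu$. The paper's argument in this case is a dimension count: by atypicality, the fibres of $\Phi_0$ over a dense set of torsion points have dimension $>\dim\cM-\dim\cJ(U)$, so the generic relative dimension of the dominant map $\Phi_0$ is $>\dim\cM-\dim\cJ(U)$, which is absurd. (The complementary case $B\subsetneq\cJ(U)$ is handled, as you roughly intend, by maximality of $Y^{\max}$, not by genericity.)

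Finally, a smaller point: your intermediate definition of $\nu_i$ as the projection of $\nu$ to the sum $\cW_i$ of non-isotrivial simple summands on which $N_i$ acts nontrivially is not the right object. When $Y$ has factorwise positive dimension one gets $N_i=H$, which acts nontrivially on \emph{every} $V_\ell$, so $\cW_i=\bigoplus_\ell V_\ell$ regardless of $i$ and records nothing. The projection that $\cD_i$ actually constrains is the complementary one: $\cD_i$ imposes a torsion section precisely in the factors $V_0/W_0$ and $V_\ell$ for $\ell\notin I_1$, i.e.\ in the part of $V$ \emph{not} contained in $\cG_i$, and the paper's $\nu_i=\nu_{V_0/W_0}+\sum_{\ell\in J}\nu_\ell$ is the projection to that complement. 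As written your $\cW_i$ and $\nu_i$ play no role in the conclusion (you introduce a fresh $\nu_{i'}$ at the end), but they obscure rather than clarify what the finitely many projections in the theorem's conclusion are.
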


\begin{remarque}
If $\cV$ is in weight $-1$ (for ``usual'' normal functions), then $\cV$ has odd level and (i) says this level is $>1$; if $\cV$ has weight $-2$, (i) says that $\cV$ has level $>0$; and if $\cV$ has weight $<-2$, (i) holds automatically.  Condition (ii) is thus of interest only when $\cV$ has weight $-1$ level $1$ (where $\mathrm{rk}(\cJ(\cV))=\tfrac{1}{2}\dim_{\QQ}(V)$), or weight $-2$ level $0$ (where $\mathrm{rk}(\cJ(\cV))=\dim_{\QQ}(V)$).
\end{remarque}

\begin{proof}
We will regard $\nu$ as a variation of mixed Hodge structure $\mathcal{E}_{\nu}\in \mathrm{Ext}^1_{\text{AVMHS}(\mathcal{M})}(\ZZ(0),\cV)$.  Since $\nu$ is generic, the geometric monodromy group $\cH$ of $\E_{\nu}$ is an extension of $H$ by $V$ (by Corollary \ref{cor-NF}(i)); and thus the Mumford-Tate group $\cG$ of $\cE_{\nu}$ is an extension of $G$ by $V$.  Consider the period map for $\cE_\nu$: 
\[\Phi\colon \mathcal{M}\to \cX=\cGamma\backslash \mathcal{D}=\cGamma\backslash\mathcal{G}(\R)/\mathcal{K}~,\]  
where the arithmetic group $\tilde{\Gamma}$ is an extension of $\Gamma$ by the translation group $V_{\ZZ}$. Consider also the monodromy period map \[\Phi_\cH\colon \mathcal{M}\to \cX_{\mathcal{H}}=\cGamma_\cH\backslash \mathcal{D}_\cH=\cGamma\backslash\mathcal{H}(\R)/\mathcal{K}_{\cH}~.\]  
The $\QQ$-vector space $V$ admits an $H$-stable decomposition into irreducible factors $V=V_0\oplus\bigoplus_{\ell\in I} V_\ell$ which reflects \eqref{decomposition}, and where $V_0=V^{H}$.

There is a natural projection $\pi\colon \cX\to X$ with fibers generalized intermediate Jacobians $\{\cJ_x\}$, as well as natural sections corresponding to torsion points in $V_{\CC}/V_{\ZZ}$ preserved by $\Gamma$. This projection is compatible with the decomposition from \eqref{decomposition}, since the intermediate Jacobian $\{\cJ_x\}$ decomposes as the product of the intermediate Jacobians associated to each summand in \eqref{decomposition}. This results in a decomposition $\pi\colon\cD=\cD_0\times_{D} \prod_{\ell\in I}\cD_\ell\rightarrow D$ and $\pi\colon\cX=\cX_0\times_X\prod_{\ell\in I}\cX_\ell\rightarrow X$. Here the fiber of $\cD_{\ell}\to D$ over $\varphi\in D$ is $V_{\ell,\CC}/F^0_{\varphi}V_{\ell,\CC}$, and that of $\cX_{\ell}\to X$ is the generalized  Jacobian $\cJ(V_{\ell})$. The monodromy period map factors furthermore as:
\[\Phi_\cH:\mathcal{M}\rightarrow \cJ(V_0)\times \prod_{\ell\in I}\cX_\ell~,\] where $\cJ(V_0)$ is the generalized Jacobian associated to the isotrivial variation $\cV_0$.\footnote{One may need to take a finite \'etale cover to define this map, which we assume already done.  All products ``$\prod$'' are relative (to $D$ or to $X$).}

Let $Y\subset \mathcal{M}$ be an irreducible subvariety, and denote by $(\mathcal{G}_{Y},\mathcal{D}_{Y})$ the mixed Hodge data associated to $\mathcal{E}_{\nu}|_{Y}$.  Further, we write $\mathcal{H}_Y\trianglelefteq \mathcal{G}_Y^{\text{der}}$ and $\mathcal{D}_Y^0\subseteq \mathcal{D}_Y$ for the geometric monodromy group and the monodromy period domain, which is the orbit of a basepoint\footnote{that is, a point whose image in $\tilde{X}$ is contained in the image of $\Phi|_Y$} in $\mathcal{D}_Y$ by $\mathcal{H}_Y(\RR)^+\mathcal{H}^{\text{un}}_Y(\CC)$. 
Recall that $Y$ is monodromically atypical for $\Phi$ if $Y$ is weakly special and $\text{codim}_{\mathcal{M}}(Y)<\text{codim}_{\mathcal{D}_\cH}(\mathcal{D}^0_Y)$.

Now let $\{(\cG_i,\cD_i,\cN_i)\}_{i\in \Sigma}$ be the finite set of triples guaranteed for $\cE_{\nu}$ by \Cref{baldi-urbanik}.  Take $Y\subset \cM$ to be a \emph{maximal} monodromically atypical subvariety, i.e.,~monodromically atypical and not contained in a larger monodromically atypical subvariety.  Then up to the action by $\cGamma$,\footnote{In fact, we can ignore the action by $\Gamma$ here, since this doesn't affect images in $\cX$.}
there exist $i\in \Sigma$ and $y\in \cD_i$  such that $\cH_Y=\cN_i$ and $\mathcal{D}_Y^0=\mathcal{N}_i(\R)\mathcal{N}_{i}^{u}(\C)\cdot y$.

We consider the various possibilities for this triple.  For $i\in \Sigma$ belonging to some subset (say, $\Sigma_0\subset \Sigma$), $G_i:=\mathcal{G}_i/\mathcal{G}_i\cap V$ has $G_i^{der}< G^{der}$ a proper subgroup.  In this case, $D_Y:=\pi(\mathcal{D}_Y)$ is a proper subdomain of $D$ and $Y$ is contained in a component $Z$ of the Hodge locus for $\phi$ (i.e., for $\cV$).  More precisely, there is a Mumford-Tate subdomain $D_i:=G_{i,\RR}/(K\cap G_{i,\RR})\subset D$ and $Z\subset \cM$ is one of finitely many components of the preimage of $\Gamma_i\backslash D_i\subset X$.  The union of these $Z$ as $i$ ranges over $\Sigma_0$ is a proper algebraic subvariety $\mathscr{Z}\subset \cM$.

If, on the other hand, $G_i^{der}=G^{der}$, then we assume furthermore that $Y$ has factorwise positive dimension for $\phi$. Since ${N}_i:=\cN_i/(\cN_i\cap V)$ is normal in $G^{der}$ and equals $H_Y$, and $H_Y=H$ by factorwise positivity, we have $N_i=H_Y=H$. Since $\mathcal{N}_i\cap V$ is $N_i$-stable, it is of the form\footnote{If $V$ has repeated factors, one may need to rearrange the decomposition of isotypical components.  However, such rearrangements are still subject to the finiteness of $\Sigma$.} \[V'_{0}\oplus\bigoplus_{\ell\in I'}V_\ell\subsetneq V_{0}\oplus\bigoplus_{\ell\in I}V_\ell~,\] for a subset $I'\subset I$ and a $\Q$ sub-Hodge structure $V'_0\subset V_0$. Hence we conclude that $\cH_Y$ is, up to an irrelevant torus part, an extension of $H$ by $V'_{0}\oplus\bigoplus_{\ell\in I'}V_\ell$. 

The group $\mathcal{G}_i$ is on the other hand an extension of $G$ by $W_0\oplus \bigoplus_{\ell\in I_1}V_\ell$ for some $I_1\subseteq I$ $W_0\subset V_0$ a sub Hodge structure. Since $\cH_Y=\cN_i$ is a subgroup of $\mathcal{G}_i^{der}$, $I'\subseteq I_1$ and $V'_0\subseteq W_0$. Let $U=W_0/V'_0\oplus \bigoplus_{\ell \in I_1\backslash I'}V_\ell$. Then $U$ and $\cH_Y$ are both normal subgroups of $\mathcal{G}_i$ with $\mathcal{G}_i^{der}/\cH_Y\simeq U$, whence $\mathcal{G}_i^{der}$ is an almost direct product of $\cH_Y$ and $U$.  This implies that $H_Y=H$ acts trivially on $U$, and hence that $U\subset V_0$ as a sub-Hodge structure. Thus $I_1=I'$.  

Let $\cD_{W_0}\rightarrow D$, $\cD_{V_0/W_0}\overset{\pi_0}{\to} D$, and $\cD_{\ell}\overset{\pi_{\ell}}{\to} D$ be the pre-intermediate-Jacobian fibrations associated to $W_0$, $V_0/W_0$, and $V_{\ell}$. Then there are torsion sections $\sigma_i^{(0)}$ of $\pi_0$, and $\sigma_i^{(\ell)}$ of $\pi_{\ell}$ for $\ell\in J:=I_1^c$, such that
\[\cD_i=\cD_{W_0}\times_D \sigma_i^{(0)} (D_{V_0/W_0})\times_D \prod_{\ell \in I_1}\cD_\ell\times_D \prod_{\ell\in J}\sigma_i^{(\ell)}(D_\ell)\,.\] 
Writing $\cX_i$ for the image of $\cD_i$ in $\cX$, $\Phi^{-1}(\cX_i)\subset\cM$ is a union of (finitely many) components of the torsion locus of the projection $\nu_i=\nu_{V_0/W_0}+\sum_{\ell\in J}\nu_\ell$ of our normal function $\nu$. By the finiteness result of \Cref{baldi-urbanik}, only finitely many such projections $\nu_i$ and sections $\sigma_i=\{\sigma_i^{(0)},\{\sigma_i^{(\ell)}\}\}$ appear (up to translation by $V_{\ZZ}$).  In particular, we can bound the torsion order of $\nu_i$ uniformly in $i$, say by $m$.  

As for $\cD_Y^{0}$, it is determined up to the choice of a point $y\in \cJ(U)$, the intermediate Jacobian of the constant Hodge structure $U$.\footnote{Here $U$ is constant possibly after passing to a finite \'etale cover of $\cM$.} As we will be primarily interested in the situation where $y$ is torsion, we make that assumption from now on. In other words, we suppose that $Y$ is special. Notice that \Cref{baldi-urbanik} does not say anything about the boundedness of the torsion orders of such points, but we are going to prove that this torsion order is indeed bounded. 

Notice first that if either $J$ is nonempty or $V_0/W_0\neq \{0\}$, then $Y$ is contained in the strict subvariety where $m\nu_i$ vanishes and we can skip the next discussion. Otherwise, assume that $V_0=W_0$ and $J=\emptyset$, and thus that $\cG_i=\cG$. Denote the projection of the monodromy period map to $\cJ(U)$ by $\Phi_0:\mathcal{M}\rightarrow \cJ(U)$. Since $Y$ is assumed to be (monodromically) atypical, and the only ``drop'' from $\cG$ to $\cH_Y=\cN_i$ is $U$, $Y$ must be an atypical fiber of $\Phi_0$ over a torsion point.

Suppose $U$ has an irreducible sub-Hodge structure $U'$ which is not of weight -1 level 1 or weight -2 level 0.  The topological invariant associated to the projection $\nu_{U'}$ is computed by the composition $\pi_1(\cM)\twoheadrightarrow H_1(\cM,\ZZ)\to H_1(\cJ(U'),\ZZ)\cong U'_{\ZZ}$ induced by $\Phi_0'$, the Zariski closure of whose image is (the underlying $\QQ$-vector subspace of) a sub-HS $U''\subset U'$.  On the other hand, we have $F^{-1}U_{\CC}'\subsetneq U_{\CC}'$; and the differential $d\Phi_0'\colon T\cM\to T\cJ(U')\cong U_{\CC}'/F^0U_{\CC}'$ factors through $F^{-1}U_{\CC}'/F^0U_{\CC}'$ by Griffiths transversality, which forces $U''_{\CC}\subset F^{-1}U'_{\CC}$.  Since $U'$ is irreducible, this forces $U''=0$, in contradiction to the assumed genericity of $\nu$.

Hence we may assume that all the Hodge structures appearing in $U$ have either weight -1 level 1 or weight -2 level 0. Then $\cJ(U)$ is a semi-abelian scheme, and it is an algebraic variety. Assume for the sake of contradiction that the image of $\Phi_0$ contains infinitely many torsion points, whose preimages are atypical. By \cite{bbt-mixed}, the image of $\Phi_0$ is an algebraic subvariety and by the solution to the Manin-Mumford conjecture \cite{manin-mumford, pila-zannier}, the closure of the torsion locus in ${\Phi}_0(\mathcal M)$ is a translate of a subgroup scheme $B$. If $B$ is strict, then ${\Phi}^{-1}_0(B)$ is a (monodromically) atypical subvariety of $\mathcal{M}$ that strictly contains $Y$, which contradicts the maximality of $Y$. Hence $B=\cJ(U)$ and $\Phi_0$ is dominant. But since (by atypicality of $Y$) the fibers of $\Phi_0$ have dimension  $>\dim(\mathcal{M})-\dim \cJ(U)$ over a Zariski dense locus in $B$, this implies that $\Phi_0$ has relative dimension $>\dim(\mathcal{M})-\dim \cJ(U)$, and as it is dominant, we get a contradiction. Thus $\Phi_0(\cM)$ contains only finitely many torsion points and we can bound the torsion order in the factor $\cJ(U)$.  Conclude that $Y$ is a component of the torsion locus of a normal function $\nu_i=\nu_{W_0/V'_0}$ and where the possible torsion orders are bounded (say, by $m$ again). 

We have now produced the finite collection $(\nu_i)_i$ of projections of $\nu$ in the statement of the Theorem.  Let $\mathscr{T}\subset \cM$ denote the union of the zero loci of $m\nu_i$.  By \cite{BP} and our genericity assumption on $\nu$, this is a proper algebraic subvariety.

Finally, let $T$ be a factorwise positive-dimensional component of $\mathrm{TL}(\nu)$.  On $T$, the monodromy period map factors through some torsion section $\sigma\colon D_{\cH}\to \cD_{\cH}$ of codimension $\mathrm{rk}(\cJ(\cV))$.  Now if (i) holds, then (invoking Griffiths transversality once more) the rank of $\delta\tilde{\nu}|_p\colon T_p\cM\to \cF^{-1}V_p/\cF^0V_p$ at a point $p\in T$, and hence $\text{codim}_{\cM}(T)$, is strictly less than $\mathrm{rk}(\cJ(\cV))$.  If (ii) holds, then $\text{codim}_{\cM}(T)<\dim(\cM)\leq \mathrm{rk}(\cJ(\cV))$.  So in either case $\text{codim}_{\cM}(T)<\mathrm{rk}(\cJ(\cV))\leq$ monodromy codimension of $T$, which makes $T$ monodromically atypical for $\cE_{\nu}$.  Let $Y$ be a maximal monodromically atypical subvariety containing $T$, and assume $Y$ is not contained in $\mathscr{Z}$.  Then the analysis above shows that $\cD_Y^0$ is determined by a torsion point, since it contains $T$, and that there exists $i$ such that $Y$ is a component of $\mathrm{TL}(\nu_i)$ with torsion order bounded by $m$.  That is, $Y\subset \mathscr{T}$.
\end{proof}



We now state some corollaries that follow from the previous theorem.

\begin{corollary}\label{corollary:non-density}
    If $\overline{\mathrm{TL}_{f-pos}(\nu)}^{Zar}$ is Zariski dense in $\mathcal{M}$, then $\cV/\cF^{-1}\cV$ is trivial and $\cV$ has rank less than  $2\dim(\mathcal{M})$.
\end{corollary}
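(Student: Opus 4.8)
The plan is to read the corollary off as the contrapositive of Theorem \ref{main}, supplemented by the elementary weight-and-level bookkeeping recorded in the remark that follows its statement.

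First I would observe that, $\mathcal{M}$ being irreducible, the hypothesis ``$\overline{\mathrm{TL}_{f-pos}(\nu)}^{Zar}$ is Zariski dense in $\mathcal{M}$'' is literally the statement $\mathcal{T}:=\overline{\mathrm{TL}_{f-pos}(\nu)}^{Zar}=\mathcal{M}$. Theorem \ref{main} guarantees that if \emph{either} of its hypotheses (i), (ii) holds, then $\mathcal{T}$ is a \emph{strict} (i.e.\ proper) algebraic subvariety of $\mathcal{M}$. Hence the density hypothesis forces both (i) and (ii) to fail. The failure of (i) is, verbatim, the assertion that the vector bundle $\cV/\cF^{-1}\cV$ is trivial, which is the first conclusion of the corollary.

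Next I would turn the failure of (ii), namely the inequality $\dim(\mathcal{M})>\mathrm{rk}(\cJ(\cV))$, into the rank bound. The key point is that the failure of (i) already constrains $\cV$ severely: as recorded in the remark following Theorem \ref{main}, triviality of $\cV/\cF^{-1}\cV$ for a nonzero polarizable $\Z$-VHS of negative weight forces $\cV$ to be either of weight $-1$ and level $1$, in which case $\mathrm{rk}(\cJ(\cV))=\tfrac12\dim_{\Q}(V)=\tfrac12\,\mathrm{rk}(\cV)$ (using Hodge symmetry $\dim V^{-1,0}=\dim V^{0,-1}$), or of weight $-2$ and level $0$, in which case $F^0\cV=0$ and $\mathrm{rk}(\cJ(\cV))=\dim_{\Q}(V)=\mathrm{rk}(\cV)$. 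In both cases $\mathrm{rk}(\cJ(\cV))\ge\tfrac12\,\mathrm{rk}(\cV)$, so $\dim(\mathcal{M})>\mathrm{rk}(\cJ(\cV))\ge\tfrac12\,\mathrm{rk}(\cV)$, i.e.\ $\mathrm{rk}(\cV)<2\dim(\mathcal{M})$.

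Since the whole argument reduces to quoting Theorem \ref{main}, there is no real obstacle in the logic; the one place that requires (routine) care is the Hodge-theoretic step in the previous paragraph --- checking that triviality of $\cV/\cF^{-1}\cV$ genuinely leaves only those two possibilities, together with the stated values of $\mathrm{rk}(\cJ(\cV))$ --- and one should keep in mind that $\nu$ is taken generic throughout, matching the hypothesis of Theorem \ref{main} (if one wanted the conclusion for an arbitrary nontorsion $\nu$, one would first reduce to a generic projection, using that the torsion locus only grows under projection of normal functions).
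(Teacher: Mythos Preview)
Your proposal is correct and matches the paper's intent: the corollary is stated in the paper without proof, as an immediate contrapositive of Theorem~\ref{main} together with the weight/level bookkeeping in the remark that follows it. Your write-up simply makes explicit the two-line deduction the authors left to the reader, including the observation that failure of (i) forces $\cV$ into the weight $-1$ level $1$ or weight $-2$ level $0$ cases, where $\mathrm{rk}(\cJ(\cV))\geq \tfrac12\,\mathrm{rk}(\cV)$.
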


If the variation $\cV$ is $\Q$-simple and non-isotrivial, then notice that in the proof of \Cref{main}, the only possibility for $\nu_i$ is $\nu$, and if $\nu$ is generically non-torsion, then it is generic by Corollary \ref{cor-NF}(iii). Hence we get the following result. 

\begin{corollary} 
Assume that $\cV$ is $\Q$-simple and non-isotrivial of negative weight with quasi-finite period map. Let $\nu\in\mathrm{ANF}_\mathcal{M}(\cV)$ be a non-torsion admissible normal function. Assume that either the vector bundle $\cV/\cF^{-1}\cV$ is non-trivial or the rank of $\mathcal{V}/F^0\mathcal{V}$ is at least\footnote{It is worth remarking here that if this rank \emph{exceeds} $\dim(\cM)$, then one \emph{expects} the non-special zero-dimensional part of the torsion locus to be finite.} $\dim(\cM)$.




Then $\mathcal{T}:=\overline{\mathrm{TL}_{\text{pos}}(\nu)}^{\text{Zar}}\subset \mathcal{M}$ is a proper algebraic subvariety.  Moreover, any component of $\mathcal{T}$ is either contained in the preimage under $\phi$ of a proper locally symmetric subvariety of $X$, or is a component of $\mathrm{TL}_{\text{pos}}(\nu)$.
\end{corollary}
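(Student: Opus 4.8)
The plan is to derive this from \Cref{main}, using $\Q$-simplicity to eliminate all the auxiliary choices appearing there. Since $\cV$ is $\Q$-simple and non-isotrivial it is irreducible and non-constant as a $\Q$-VHS, so by \Cref{cor-NF}(iii) the nontorsion normal function $\nu$ is automatically \emph{generic}; hence \Cref{main} applies as soon as one of its hypotheses \textup{(i)}, \textup{(ii)} holds. But \textup{(i)} is the first alternative of the corollary verbatim, and the second alternative is \textup{(ii)}: for a VHS of negative weight the generalized intermediate Jacobian $\cJ(V)$ has Lie algebra $V_\CC/F^0V_\CC$, so $\mathrm{rk}(\cJ(\cV))=\mathrm{rk}(\cV/\cF^0\cV)$ and ``$\mathrm{rk}(\cV/\cF^0\cV)\ge\dim\cM$'' is exactly ``$\dim\cM\le\mathrm{rk}(\cJ(\cV))$''. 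Next, because $\cV$ is $\Q$-simple its only nonzero quotient VHS is $\cV$ itself, so the only projection $\nu_i$ produced by \Cref{main} is $\nu$; and each component of the Hodge locus for $\phi$ is a component of the $\phi$-preimage of a proper Mumford--Tate subdomain of $D$, which --- when $D$ is Hermitian symmetric, as in \Cref{main-intro} --- is a proper connected Shimura, hence locally symmetric, subvariety of $X$. With these identifications, \Cref{main} already gives the conclusion with $\mathrm{TL}_{f-pos}(\nu)$ in place of $\mathrm{TL}_{\mathrm{pos}}(\nu)$: $\overline{\mathrm{TL}_{f-pos}(\nu)}^{\mathrm{Zar}}$ is proper and algebraic, and each of its components is contained in the $\phi$-preimage of a proper locally symmetric subvariety of $X$, or is a component of $\mathrm{TL}_{f-pos}(\nu)\subseteq\mathrm{TL}_{\mathrm{pos}}(\nu)$.

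It remains to pass from factorwise positive to positive. If in addition the generic adjoint Mumford--Tate group $G^{\mathrm{ad}}$ is simple --- the situation of \Cref{main-intro}, and the case relevant for the Ceresa normal function, where $G^{\mathrm{ad}}=\mathrm{PGSp}_{2g}$ --- there is nothing to do: $H^{\mathrm{ad}}$ is a nontrivial normal subgroup of $G^{\mathrm{ad}}$, hence equals $G^{\mathrm{ad}}$, so $X_H$ has a single simple factor and $\mathrm{TL}_{f-pos}(\nu)=\mathrm{TL}_{\mathrm{pos}}(\nu)$. For general $\Q$-simple $\cV$ I would show that any positive-dimensional component $Y$ of $\mathrm{TL}(\nu)$ which is \emph{not} of factorwise positive dimension is nonetheless contained in the $\phi$-preimage of a proper locally symmetric subvariety, so it still falls under the first alternative: the codimension count at the end of the proof of \Cref{main} (Griffiths transversality, together with \textup{(i)} or \textup{(ii)} and the fact that $\nu|_Y$ is torsion) shows $Y$ is monodromically atypical \emph{without} using factorwise positivity; and for a maximal monodromically atypical $\tilde Y\supseteq Y$ the group $\cN_i=\cH_{\tilde Y}$ furnished by \Cref{baldi-urbanik}, being a \emph{proper} normal subgroup of $H$ precisely because $\tilde Y$ is not factorwise positive, is a proper sub-product of the simple factors of $H$; hence $\phi_H(\tilde Y)$ lies in a strict sub-product of $X_H$ translated by a point, i.e.\ in a proper weakly special --- hence locally symmetric --- subvariety of $X$.

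The main obstacle is then to see that $\cT=\overline{\mathrm{TL}_{\mathrm{pos}}(\nu)}^{\mathrm{Zar}}$ is still \emph{proper} in the general case: as a Zariski closure it is automatically a finite union of subvarieties, so what must be shown is that the closure of the union of these ``extra'' non-factorwise-positive components does not exhaust $\cM$. Each such component lies in a proper weakly special $W\subsetneq\cM$, on which $\cV|_W$ decomposes and $\dim W<\dim\cM$, so I would argue by induction on $\dim\cM$, applying the general form of \Cref{main} (which needs only genericity of the normal function, not $\Q$-simplicity of $\cV$) to those $\Q$-simple non-isotrivial projections of $\nu|_W$ that remain generic, and dealing separately with the case where $\nu|_W$, or one of its projections, becomes torsion --- in which case $W$ is itself weakly special and a component of $\mathrm{TL}(\nu)$, and one controls the number of such $W$ by pairing the finiteness in \Cref{baldi-urbanik} with the Manin--Mumford argument already carried out in the proof of \Cref{main}. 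I expect the bookkeeping of which projections of $\nu|_W$ stay generic, and obtaining a torsion-order bound uniform over the resulting family of $W$'s, to be the technical crux, directly mirroring the Manin--Mumford step in the proof of \Cref{main}. Collecting the Hodge-locus components, the weakly-special loci coming from the non-factorwise-positive components, and the components of $\mathrm{TL}_{f-pos}(\nu)$ then yields the stated dichotomy for the components of $\cT$.
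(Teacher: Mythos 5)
Your first paragraph reproduces exactly the paper's own (very brief) justification: $\Q$-simplicity forces $\nu_i=\nu$ in the proof of Theorem \ref{main}, and Corollary \ref{cor-NF}(iii) upgrades ``nontorsion'' to ``generic,'' after which hypotheses (i), (ii) of Theorem \ref{main} are literally the two alternatives of the corollary, and the Hodge-locus components become $\phi$-preimages of proper locally symmetric subvarieties. Up to that point you and the paper are doing the same thing.

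Where you do something genuinely extra --- and to your credit --- is in flagging that Theorem \ref{main} only controls $\mathrm{TL}_{f\text{-}pos}$, whereas the corollary asserts properness of $\overline{\mathrm{TL}_{pos}}^{\mathrm{Zar}}$. You correctly observe that $\Q$-simplicity of $\cV$ does \emph{not} imply $\Q$-simplicity of the monodromy group $H$ (take $V=V_1\otimes V_2$ with $H=H_1\times H_2$), so ``factorwise positive'' and ``positive'' need not coincide in the generality stated, and the paper's one-sentence justification does not say how the extra non-factorwise-positive components are absorbed. Your fix for the $G^{\mathrm{ad}}$-simple case (which covers the applications in the paper, and matches the hypothesis of Theorem \ref{main-intro}) is clean and correct: $H^{\mathrm{ad}}\trianglelefteq G^{\mathrm{ad}}$ forces $H^{\mathrm{ad}}=G^{\mathrm{ad}}$, $X_H$ has one simple factor, and $\mathrm{TL}_{f\text{-}pos}=\mathrm{TL}_{pos}$.

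For the fully general $\Q$-simple case your sketch has a real gap, which you also honestly flag. You argue that a non-factorwise-positive component lies in $\phi^{-1}$ of a fiber of $X_H\to X_{H/N_i}$ over a point $p$, i.e.\ in a proper weakly special subvariety. That is right, but this $p$ ranges over a \emph{continuous} family, and the Manin--Mumford step in the proof of Theorem \ref{main} only bounds the torsion order in the \emph{unipotent} direction (the intermediate Jacobian of the constant piece $U$); there is no ``torsion'' constraint on $p$ in the semisimple direction, so Manin--Mumford cannot be re-used to bound which $p$ occur, and the proposed induction on $\dim\cM$ would have to supply a replacement for that step. As you say, this is the technical crux, and your proposal leaves it open. (In the paper's intended scope --- $G^{\mathrm{ad}}$ simple, per Theorem \ref{main-intro} --- the issue does not arise, and then your argument is complete and matches the paper's.)
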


\begin{remarque}
The previous theorem shows there are at most finitely many Shimura subvarieties $X_i\subsetneq X$ such that the normal function is generically non-torsion on $\phi^{-1}(X_i)$ but is torsion on a (possibly dense) countable union of subvarieties thereof.  As an immediate consequence, at most finitely many Hecke translates of such an $X_i$ will support dense positive-dimensional torsion locus of $\nu$.    
\end{remarque}



\subsection{Analytic density and equidistribution of the torsion locus}

We assume in this section that $\cV$ has weight $-1$ and level $1$, that all its $\Q$-factors are non-isotrivial, and $H=G^{der}$. We continue to assume that the period map for $\cV$ is quasi-finite. Let $\nu\in\mathrm{ANF}_{\mathcal{M}}(\cV)$ be a generic normal function in the sense of Definition \ref{defn-gen}.

To analyze the density properties of the torsion locus of $\nu$, we need to understand  better the local properties of the normal function $\nu$, namely its \emph{generic rank} which we will now define. 

Let $\cJ(\cV)\rightarrow \mathcal{M}$ be the family of intermediate Jacobians of which $\nu$ is a section. For $U\subset \mathcal{M}$ an open simply connected subset, the family of Jacobians $\cJ(\cV)\rightarrow U$ admits a $C^{\infty}$-trivialization $\cJ(V_\R)\times U\rightarrow U$ where $\cJ(V_\R)$ is a compact torus. Projecting to the first factor and lifting to $V_\R$ yields the \emph{Betti map} $\nu_\R:U\rightarrow V_\R$. 

For $s\in U$, we can define the rank of $\nu$ at $s$ as the rank of the differential of $\nu_\R$ at $s$. Denoting by $\widetilde{\mathcal{M}}$ the universal cover of $\mathcal{M}$, the previous discussion produces a map $\nu_\R:\widetilde{\mathcal{M}}\rightarrow V_\R$. The fibers of $\nu_\R$ are complex analytic subvarieties of $\widetilde{\cM}$.
\begin{definition}
The generic rank of $\nu$ is $\max_{s\in \widetilde{\mathcal{M}}}\rank_s(d\nu_\R)$. It is an even integer.
\end{definition}

In our setting, the rank of the Betti map was fully studied in \cite{gao-betti-map}, and a key ingredient was to use the mixed Ax-Schanuel Theorem. The following result is a particular case of \cite[Theorem 1.4. (ii)]{gao-betti-map}. Using this idea, we reproduce this result in a Hodge theoretic setting to make our paper more self-contained. The proof aligns more closely with \cite[Theorem 9.1]{gao-mixed-ax-schanuel}. We refer also to \cite[Theorem 3.1]{gao-zhang} for a more general result in weight $-1$ and to \cite{khelifa-urbanik} for a purely Hodge theoretic approach. 
\begin{theorem}\label{generic-rank-betti}
Assume that $G^{ad}$ is simple and $\cV$ has no isotrivial factors. Then the generic rank of $\nu$ is equal to $\mu:=\min(\dim_{\R}{\mathcal{M}},\dim_\Q V)$.
\end{theorem}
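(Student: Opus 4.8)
The upper bound $\rank_s(d\nu_\R)\leq \mu$ is immediate: the source has real dimension $\dim_\R\mathcal M$ and the target $V_\R$ has dimension $\dim_\Q V$, so the rank of any differential is at most $\min(\dim_\R\mathcal M,\dim_\Q V)$. The content is the lower bound, i.e. producing a point where the Betti map $\nu_\R\colon\widetilde{\mathcal M}\to V_\R$ has maximal possible rank. My plan is to run this by contradiction: suppose the generic rank is $2r<\mu$, and let $F$ be a general fiber of $\nu_\R$, which (as noted in the text) is a complex analytic subvariety of $\widetilde{\mathcal M}$ of dimension $\dim_\C\mathcal M-r>0$ (nonempty and positive-dimensional because $2r<\mu\leq 2\dim_\C\mathcal M$). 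Passing to the associated VMHS, $\nu$ is encoded by the period map $\Phi\colon\mathcal M\to\mathcal X=\tilde\Gamma\backslash\mathcal D$ for $\mathcal E_\nu$, whose geometric monodromy group $\mathcal H$ is (by genericity and Corollary \ref{cor-NF}(i)) an extension of $H=G^{der}$ by $V$, so that $\mathcal D$ surjects onto $D$ with fiber the intermediate Jacobian $\cJ(V_\R)$, and the Betti map is literally the composition of $\tilde\Phi$ with the $C^\infty$ projection $\mathcal D\to\cJ(V_\R)$ followed by a lift to $V_\R$.

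The key tool is the mixed Ax--Schanuel theorem for variations of mixed Hodge structure (as used in \cite{gao-mixed-ax-schanuel,gao-betti-map}): a component of an intersection of the graph of $\tilde\Phi$ with a ``weakly special'' analytic subset of $\widetilde{\mathcal M}\times\mathcal D$ that is atypically large in dimension must be contained in (the preimage of) a proper weakly special subvariety. Concretely, I would apply Ax--Schanuel to the fiber $F$ sitting inside a fiber of the projection $\mathcal D\to\cJ(V_\R)$. The fiber $F$ has dimension $\dim_\C\mathcal M-r$; the relevant weakly special set through a general point (the bi-algebraic/flat subvariety cut out inside $\mathcal D$ by fixing the $\cJ(V_\R)$-coordinate up to the real-analytic identification) has the ``expected'' codimension $\mu/2$ when one accounts correctly for the real versus complex dimensions; since $r<\mu/2$, the component $F$ is atypical, so Ax--Schanuel forces $\Phi(F)$ — equivalently a subvariety of $\mathcal M$ through a general point containing $F$ — to lie in a proper weakly special subvariety $S'\subsetneq\mathcal M$ for $\mathcal E_\nu$. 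Since a \emph{general} point lies on such an $F$, and there are only countably many weakly special subvarieties, some single $S'$ must cover a Zariski-dense subset of $\mathcal M$, hence $S'=\mathcal M$: the whole of $\mathcal M$ is weakly special in a way that drops the monodromy, contradicting that $\mathcal H$ is the full extension of $H=G^{der}$ by $V$ (and hence, via the simplicity of $G^{ad}$ and non-isotriviality, contradicting genericity of $\nu$ — this is exactly the point where the hypotheses ``$G^{ad}$ simple'', ``$H=G^{der}$'', ``$\cV$ generic with no isotrivial factors'' enter, since they guarantee the monodromy cannot factor through anything proper without $\nu$ being non-generic or the base being constrained by a proper Shimura subvariety).

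The main obstacle, and where the argument needs the most care, is the bookkeeping of dimensions in the Ax--Schanuel inequality: the Betti map is only real-analytic (not holomorphic), the intermediate Jacobian $\cJ(V_\R)$ is a real torus of real dimension $\dim_\Q V$, and the ``weakly special'' leaves one intersects with must be described on the mixed period domain $\mathcal D$ in a way compatible with the mixed Ax--Schanuel statement — this is precisely the subtlety handled in \cite[Theorem 9.1]{gao-mixed-ax-schanuel} and \cite[Theorem 1.4(ii)]{gao-betti-map}, and I would follow their normalization closely, translating their abelian/semiabelian-scheme language into the Hodge-theoretic language of $\mathcal E_\nu$ and its period map. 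A secondary point to verify is that a general fiber $F$ of $\nu_\R$ is genuinely an \emph{algebraic} subvariety of $\mathcal M$ (or at least contained in one of controlled dimension) so that the countability-of-weakly-special and Zariski-density steps apply; this follows from the definability results of \cite{bbt-mixed} together with o-minimal GAGA, exactly as in the cited references. Once these are in place, the contradiction is immediate and gives $\rank_s(d\nu_\R)=\mu$ generically.
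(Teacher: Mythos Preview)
Your overall architecture --- assume the generic rank $2k$ is strictly below $\mu$, exhibit a fiber of $\nu_\R$ as an atypical intersection in $\cD\times\cM$, and invoke mixed Ax--Schanuel --- is the same as the paper's, and the codimension bookkeeping you worry about is exactly what the paper carries out (the fiber $\nu_\R^{-1}(x)$ is the preimage of the \emph{holomorphic} section $\sigma_x(D)\subset\cD$, of complex codimension $\tfrac12\dim_\Q V$, and $k<\tfrac12\dim_\Q V$ gives atypicality).

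The gap is in your endgame.  The assertion ``there are only countably many weakly special subvarieties'' is false for \emph{mixed} variations: by Definition~2.1(3) a weakly special $Y$ is a component of $\Phi^{-1}(\pi_Y^{-1}(\{x\}))$ for a Hodge-generic point $x$ of $\overline{\cGamma}_Y\backslash\overline{\cD}_{\overline{\cG_Y}}$, and this $x$ varies over a continuum (think of arbitrary translates of abelian subschemes in a universal abelian variety).  So your Baire/covering step does not yield a contradiction; relatedly, a general fiber of $\nu_\R$ over a non-torsion value is \emph{not} algebraic in $\cM$, so the o-minimal GAGA appeal is also misplaced.  The paper repairs this by applying Baire not to the weakly special subvarieties but to the \emph{Hodge locus} of $\cE_\nu$ (which \emph{is} a countable union of proper analytic subsets) in order to select a single fiber $Y_0=\nu_\R^{-1}(x)$ through a Hodge-generic point.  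Ax--Schanuel then returns one strict weakly special $\cZ\supset Y_0$, and one analyzes $\cZ$ directly: Hodge-genericity forces $\cG_\cZ=\cG$; positive-dimensionality of $Y_0$ together with simplicity of $G^{ad}$ forces $H_\cZ=G^{der}$; and since $\cH_\cZ$ is normal in $\cG^{der}=V\rtimes G^{der}$ while $V$ has no $G^{der}$-invariants (no isotrivial factors), one gets $\cH_\cZ=\cG^{der}$, contradicting strictness of $\cZ$.  This last chain is precisely where the hypotheses ``$G^{ad}$ simple'' and ``no isotrivial factors'' do their work --- in your argument as written they are gestured at but never actually used.
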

\begin{proof}
Let $d$ be the complex dimension of $\mathcal{M}$. Obviously the generic rank $2k$ of $\nu$ cannot be larger than $\mu$.  By way of contradiction, assume that this rank is smaller than $\mu$ (hence that $k<d$ and $k<\tfrac{1}{2}\dim_{\QQ}(V)$). Let $\widetilde{\Phi}:\widetilde{\mathcal{M}}\rightarrow \cD$ be the lift of the period map associated to the variation of mixed Hodge structure $\cE_\nu$. Every point $x\in \cJ(V_\R)$ defines a holomorphic section $\sigma_x:D\rightarrow \cD$ and it is clear that $\tilde{\Phi}^{-1}(\sigma_x(D)\cap \widetilde{\Phi}(\widetilde{\cM}))=\nu_\R^{-1}(\{x\})$. 

Since $\nu_\R$ has generic rank $2k$, we can find an analytic open subset $U\subset \widetilde{\mathcal{M}}$ such that the fibers of $\nu_\R$ above $\nu_\R(U)$ are complex analytic of complex dimension $d-k$. Furthermore, since the Hodge locus of $\widetilde{\mathcal{M}}$ is a countable union of strict analytic subvarieties, we can find a point $y\in U$ which is Hodge generic by Baire's theorem. Let $x=\nu_\R(y)\in V_\R$; then it satisfies the following properties: 
\begin{enumerate}
    \item the fiber $Y_0=\nu_\R^{-1}(x)$ is complex analytic of complex dimension $d-k$ ($>0$ by the assumption on $k$);
    \item $Y_0$ is Hodge generic (for $\cE_{\nu}$) inside $\widetilde{\mathcal{M}}$.
\end{enumerate}

Let $\Delta=\cD\times_{\Gamma\backslash \cD}\mathcal{M}$ and $W=D_x\times \mathcal{M}$. Then $Y_0$ is a complex analytic component of the intersection $W\cap \Delta$ in $\cD\times\mathcal{M}$ and we have: 
\[\codim_{\cD\times\mathcal{{M}}}(Y_0)=\dim(\cD)+k,\mspace{20mu} \codim_{\cD\times\mathcal{{M}}}(W)=\frac{1}{2}\dim_\Q V,\mspace{20mu}\codim_{\cD\times\mathcal{{M}}}(\Delta)=\dim(\cD)\,.\]
Hence (by our assumption on $k$) the following inequality is satisfied: 
\[\codim_{\cD\times\mathcal{{M}}}(Y_0)<\codim_{\cD\times\mathcal{{M}}}(W)+\codim_{\cD\times\mathcal{{M}}}(\Delta)~.\]

By the Ax--Schanuel theorem for variations of mixed Hodge structures \cite{Klingler-AS-mixed}, we conclude that the projection of $Y_0$ to $\cM$ is contained in a \emph{strict} weakly special subvariety $\cZ$.  That is, the period map sends $\cZ$ into (a quotient of) a \emph{proper} weak Mumford-Tate subdomain of $\cD$, and so the geometric monodromy group $\cH_{\cZ}$ of $\cE_{\nu}|_{\cZ}$ is a \emph{proper} subgroup of $\cG^{der}$.  But since $Y_0$ is Hodge generic, the generic Mumford-Tate group of $\cE_{\nu}|_{\cZ}$ is equal to $\cG$; and because $Y_0$ is positive-dimensional (and $G^{ad}$ is simple), we have that $H_\cZ=G^{der}$. As $\mathcal{H}_\cZ$ is a normal subgroup of $\cG$ and $G^{der}$ acts non-trivially on every factor of $V$, we conclude that $\cH_\cZ=\cH=\cG^{der}$, which is a contradiction. This proves the desired result. 
\end{proof}

Let $\psi:V_\Z\times V_\Z\rightarrow \Z$ be the polarization form and let $\beta$ be the corresponding harmonic $(1,1)$ form on $\cJ(V_\R)$. Then $\beta^r$ determines a volume form on $\cJ(V_\R)$, where $r=\frac{1}{2}\dim_\Q V$ and let $\mathrm{vol}(\cJ(V_\R))$ be the volume of $\cJ(V_\R)$ with respect to $\beta^r$, which is also the covolume of the lattice $V_\Z$ in $V_\R$\footnote{The quadratic form on $V_\R$ is $\psi(\cdot,J \cdot)$ where $J$ is the almost complex structure on $V_\R$.}. It is well known that the torsion points on $\cJ(V_\R)$ are equidistributed with respect to $\beta^r$; that is, for every open subset $U\subset \cJ(V_\R)$ with measure-zero boundary, we have 
\[\left|\{z\in U|\, n\cdot z=0\}\right|\underset{n\rightarrow \infty}{\sim}\frac{n^{2r}}{ \mathrm{vol}(\cJ(V_\R))}\cdot \int_{U}\beta^r~.\]
The pullback form $\omega_{Betti}=\nu^{*}\beta$ to $\widetilde{\mathcal{M}}$ is translation invariant and descends to a form on $\mathcal{M}$, also known as the \emph{Betti form}.

\begin{theorem}\label{density}
We keep the same assumptions as \Cref{generic-rank-betti} and we assume furthermore that $\cV$ has rank $2r\leq 2\dim(\mathcal{M})=2d$. Then $\mathrm{TL}(\nu)$ is analytically dense in $\mathcal{M}$ and equidistributed with respect to the Betti form $\omega_{Betti}$, i.e., for every $U\subset \mathcal{M}$ relatively compact subset with zero measure boundary and differential form $\alpha$, we have:
\begin{equation}\label{eq-equi}
\int_{\{x\in U|\, n\cdot\nu(x)=0\}}\alpha \underset{n\rightarrow \infty}{\sim} \frac{n^{2r}}{ \mathrm{vol}(\cJ(V_\R))}\cdot\int_{U}\alpha\wedge \omega_{Betti}^r~.
\end{equation}
\end{theorem}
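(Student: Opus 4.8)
The plan is to deduce \eqref{eq-equi} from the classical equidistribution of torsion points on the flat torus $\cJ(V_\R)$ together with the maximality of the generic rank of $\nu$ supplied by \Cref{generic-rank-betti}, using fiber integration and a partition of unity. Write $d=\dim_\C\cM$. Only the degree-$2(d-r)$ component of $\alpha$ can contribute to $\alpha\wedge\omega_{Betti}^r$, so I would first reduce to the case $\deg\alpha=2(d-r)$; for other degrees the right-hand side vanishes while the left-hand side is $o(n^{2r})$ (every component of $\{x\mid n\cdot\nu(x)=0\}$ is complex analytic of dimension $\geq d-r$, being a fiber of the Betti map, and the contributions of dimension $>d-r$ come from the fixed Hodge locus and from the critical values of the Betti map, of which there are only $o(n^{2r})$ among the $n$-torsion points), so \eqref{eq-equi} is then immediate. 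Next I would cover $\overline{U}$ by finitely many simply connected open sets $U_1,\dots,U_N\subset\cM$ with a subordinate partition of unity $\{\rho_\ell\}$, reducing to the asymptotic for each $\int_{\{x\mid n\cdot\nu(x)=0\}}\rho_\ell\,\alpha$. Over $U_\ell$, the $C^\infty$-trivialization $\cJ(\cV)|_{U_\ell}\cong\cJ(V_\R)\times U_\ell$ realizes $\nu|_{U_\ell}$ as the graph of a real-analytic map $g_\ell\colon U_\ell\to\cJ(V_\R)=V_\R/V_\Z$ with $g_\ell^{*}\beta=\omega_{Betti}|_{U_\ell}$, and identifies $\{x\in U_\ell\mid n\cdot\nu(x)=0\}$ with $g_\ell^{-1}\!\big(\tfrac{1}{n}V_\Z/V_\Z\big)$, so $\int_{\{x\mid n\cdot\nu(x)=0\}}\rho_\ell\,\alpha=\sum_{z}\int_{g_\ell^{-1}(z)}\rho_\ell\,\alpha$, the sum running over the $n$-torsion points $z$ of $\cJ(V_\R)$.

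The crucial input is that, because $2r\leq 2d$, \Cref{generic-rank-betti} gives that $\nu$ has generic rank $\min(2d,\dim_\Q V)=2r=\dim_\R\cJ(V_\R)$; equivalently, $g_\ell$ is a submersion outside a proper real-analytic subset $Z_\ell\subset U_\ell$, which has measure zero and whose image $g_\ell(Z_\ell)$ has measure zero in $\cJ(V_\R)$. Fiber integration then produces a function $(g_\ell)_*(\rho_\ell\alpha)$ on $\cJ(V_\R)$, smooth off $g_\ell(Z_\ell)$, and I would check that it lies in $L^1(\cJ(V_\R),\beta^r)$ and satisfies the projection formula $\int_{\cJ(V_\R)}(g_\ell)_*(\rho_\ell\alpha)\,\beta^r=\int_{U_\ell}\rho_\ell\,\alpha\wedge\omega_{Betti}^r$. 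Granting this, since the $n$-torsion points of $\cJ(V_\R)$ equidistribute with respect to $\tfrac{1}{\mathrm{vol}(\cJ(V_\R))}\beta^r$ and number $n^{2r}$, the equidistribution criterion of \cite{tayoutholozan} (whose hypotheses are exactly equidistribution of the point sets, polynomial growth of their cardinality, and $L^1$-integrability of the fiber integral) yields
\[\sum_{z}\int_{g_\ell^{-1}(z)}\rho_\ell\,\alpha\;=\;\sum_{z}(g_\ell)_*(\rho_\ell\alpha)(z)\;\underset{n\to\infty}{\sim}\;\frac{n^{2r}}{\mathrm{vol}(\cJ(V_\R))}\int_{\cJ(V_\R)}(g_\ell)_*(\rho_\ell\alpha)\,\beta^r,\]
and summing over $\ell$ gives \eqref{eq-equi}. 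Alternatively, the passage from a continuous to a merely $L^1$ weight can be done by hand, approximating $(g_\ell)_*(\rho_\ell\alpha)$ in $L^1$ and bounding the error via the uniform estimate $\#\big(\tfrac{1}{n}V_\Z/V_\Z\cap B\big)=O\!\big(n^{2r}\,\mathrm{vol}(B)\big)$ for torsion points in a ball $B$.

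For the analytic density of $\mathrm{TL}(\nu)$: if some nonempty open $W\subset\cM$ were disjoint from $\mathrm{TL}(\nu)$, then \eqref{eq-equi} with $U=W$ and $\alpha=\omega^{d-r}$ for a Kähler form $\omega$ on $\cM$ would force $\int_W\omega^{d-r}\wedge\omega_{Betti}^r=0$; but $\omega_{Betti}=\nu^{*}\beta$ is semipositive, so $\omega^{d-r}\wedge\omega_{Betti}^r\geq 0$, and its vanishing throughout $W$ would mean $d\nu_\R$ is nowhere a submersion on $W$, contradicting \Cref{generic-rank-betti}, since the locus where $d\nu_\R$ has rank $2r$ is a dense open subset of $\widetilde{\cM}$. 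The hard part of the whole argument is the claimed $L^1$-integrability of the fiber integral near $g_\ell(Z_\ell)$ together with the justification of the limit passage against it; this is where one must resolve the singularities of $g_\ell$ and apply a Fubini argument, exactly as in the equidistribution formalism of \cite{tayoutholozan}. Everything else is bookkeeping (partition of unity, degree count, projection formula) or the classical equidistribution of torsion points on a flat torus.
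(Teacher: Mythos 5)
Your proposal is essentially correct and follows the same route as the paper: the key input is Theorem~\ref{generic-rank-betti} (giving that $\nu_\R$ is submersive off a measure-zero subset), and then equidistribution is obtained by the fiber-integration method of \cite[Theorem~3.6]{tayoutholozan}. The paper simply cites that theorem for the equidistribution step, whereas you helpfully unpack the mechanism (partition of unity, reduction to degree $2(d-r)$, fiber integration, $L^1$-integrability of the pushforward, classical equidistribution of $\tfrac1n V_\Z/V_\Z$ on the flat torus). The one genuine divergence is in the density claim: you derive it by contradiction from the equidistribution asymptotic, taking $\alpha=\omega^{d-r}$ and using semipositivity of $\omega_{Betti}$; the paper argues more directly that since $\nu_\R$ is a submersion on a dense open set and $V_\Q$ is dense in $V_\R$, any nonempty open $W\subset\cM$ has $\nu_\R(W)$ containing an open set, hence a rational point, hence $W$ meets $\mathrm{TL}(\nu)$. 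Both are valid; the paper's is logically lighter since it avoids relying on the full equidistribution statement. Your preliminary degree-count remark (that for $\deg\alpha\neq 2(d-r)$ both sides are $o(n^{2r})$, with higher-dimensional fibers accounted for by criticality/Hodge-locus considerations) is a reasonable reduction, though the paper leaves this implicit in the appeal to \cite{tayoutholozan}.
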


 \begin{proof}
Let $U\subset \mathcal{M}$ be a simply connected open subset as in the theorem and let $\nu_\R:U\rightarrow V_\R$ be the restriction of the Betti map to $U$. By \Cref{generic-rank-betti}, the generic rank of $\nu_\R$ is equal to $\dim_\R V$, hence $\nu_\R$ is submersive, outside a measure zero subset. Notice also that the preimages of $V_\Q$ are exactly the components in $U$ of the torsion locus of $ \nu$, which shows the analytic density of $\mathrm{TL}(\nu)$.

As for the equidistribution, one can argue exactly as in the proof of \cite[Theorem 3.6]{tayoutholozan} using the submersivity of $\nu_\R$, to prove that $\nu_\R^{-1}(V_\Q)$ is equidistributed with respect to $\nu_\R^{*}\beta_{Betti}^r=\omega_{Betti}^r$.  That is, \eqref{eq-equi} holds
for every differential form $\alpha$, which is the desired result. 
\end{proof}

\begin{remarque}\label{rem-density}
The conclusions of Theorem \ref{density} hold as long as $\dim(\cM)\geq \mathrm{rk}(\cJ(\cV))$ and $\mathrm{rk}(\cJ(\cV))=k$ (i.e.,~$\mathrm{rk}(d\nu_{\RR})=\dim_{\QQ}(V)$).  If $\cV$ is isotrivial, one may be able to check the latter condition directly; for example, it obviously holds if $\mathrm{rk}(\cJ(\cV))=1$ and $\nu$ is not flat.
\end{remarque}

\section{Applications}
\subsection{The torsion locus for the Ceresa cycle}\label{S4.1}
For a curve $C$ of genus $g$ and $s\in C$ a complex base point, we have the Ceresa cycle $Z_C$ and associated normal function $\nu$ as described in the introduction.  In the following we work with a level $\ell\geq 3$ structure (in the automorphic sense), so that the period map is 2:1 and stacky issues do not enter.  By the ``Main Theorem'' below we shall mean Theorem \ref{main}, though the reader can also take it to mean Theorem \ref{main-intro} (which also applies in each instance).

\begin{corollary} Let $\phi\colon \mathcal{M}_g[\ell]\to \mathcal{A}_g[\ell]=X$ be the usual period map, $\mathcal{V}$ be the VHS whose fiber over $[C]\in \mathcal{M}_g[\ell]$ is $H^3_{\text{pr}}(J(C))(2)\cong \textstyle(\bigwedge^3 H^1(C))(2)/(H^1(C)(1))$, and $\nu\in \mathrm{ANF}_{\cM_g[\ell]}(\cV)$ be the Ceresa normal function.  Write $H_g\subset \mathcal{M}_g[\ell]$ for the hyperelliptic locus \textup{(}along which $\Phi$ is ramified\textup{)}.  Then there is a finite set of Shimura subvarieties $Y_i\subset X$ of codimension $\geq 2$, and a finite set of subvarieties $T_i\subset \mathcal{M}_g[\ell]$ of dimension $\leq 2g-2$ \textup{(}or $3$, if $g=3$\textup{)}, such that if $U\subset \mathcal{M}_g[\ell]$ denotes the complement of $H_g$, $\cup T_i$, and $\cup \phi^{-1}(Y_i)$, then $\mathrm{TL}(\nu)\cap U(\CC)$ is a countable set of points.

If one is interested in the locus where the \emph{cycle itself} is torsion, then in the above one can remove any points not in $U(\bar{\QQ})$.  \textup{(}One can make sense of this over $\mathcal{M}_g[\ell]$ by choosing the canonical class as ``base point''.\textup{)}
\end{corollary}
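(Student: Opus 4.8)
The plan is to deduce this corollary directly from the Main Theorem (Theorem~\ref{main}) applied to the Ceresa normal function $\nu$, after verifying that its hypotheses are met, and then to translate the abstract conclusion into the concrete geometry of $\cM_g[\ell]$ and $\cA_g[\ell]$. First I would check that $\cV$ satisfies the standing assumptions: it is a polarized $\Z$-VHS of weight $-1$, it is $\Q$-simple and non-isotrivial (this is the irreducible representation $\bigwedge^{\langle 3\rangle}$ of $\mathrm{GSp}_{2g}$, and the period map to $\cA_g[\ell]$ is quasi-finite — indeed $2:1$ ramified along $H_g$ — by Torelli), and crucially that it has \emph{level $3$}, so $\cV\neq \cF^{-1}\cV$ and hypothesis (i) of Theorem~\ref{main} holds for all $g\geq 3$. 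Since $\cV$ is $\Q$-simple and non-isotrivial, Corollary~\ref{cor-NF}(iii) tells us that $\nu$ being nontorsion (which is Ceresa's theorem) is equivalent to $\nu$ being generic; and as noted in the paper, in this $\Q$-simple situation the only projection of $\nu$ is $\nu$ itself.

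Next I would invoke Theorem~\ref{main}: it gives that $\mathcal T:=\overline{\mathrm{TL}_{f\text{-}pos}(\nu)}^{\mathrm{Zar}}$ is a proper algebraic subvariety, and every component of $\mathcal T$ is either contained in a component $Z$ of the Hodge locus of $\phi$, or is itself a factorwise-positive-dimensional component of $\mathrm{TL}(\nu)$ with bounded torsion order $m$. The components $Z$ of the Hodge locus are (finitely many) components of preimages $\phi^{-1}(\Gamma_i\backslash D_i)$ of strict Mumford-Tate subdomains; these are preimages of strict Shimura subvarieties $Y_i\subset \cA_g[\ell]$, and their codimension in $X$ is at least $2$ since a $\mathrm{GSp}_{2g}$-orbit cannot drop by codimension exactly $1$ (any proper reductive subgroup of $\mathrm{GSp}_{2g}$ arising as a Mumford-Tate group gives a subdomain of codimension $\geq 2$ — I would cite the standard classification, or argue via the associated Hodge-theoretic constraints). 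The remaining components $T_i$ are the factorwise-positive-dimensional torsion components proper; here I would bound their dimension. The key input is the generic rank / codimension estimate from the proof of Theorem~\ref{main}: on such a component $T$, Griffiths transversality forces $\mathrm{codim}_{\cM}(T) \geq \mathrm{rk}(\delta\tilde\nu)$, and since $\cV/\cF^{-1}\cV$ is nontrivial one gets $\mathrm{codim}_{\cM}(T) < \mathrm{rk}(\cJ(\cV))$; I would combine this with $\dim\cM_g = 3g-3$ and $\mathrm{rk}(\cJ(\cV)) = \tfrac12\dim_\Q V = \tfrac12(\binom{2g}{3} - 2g)$. Actually one needs the \emph{sharper} monodromy-codimension bound: on $T$ the monodromy period map lands in a torsion section of codimension $\mathrm{rk}(\cJ(\cV))$, and one uses the full-monodromy argument (as in Theorem~\ref{generic-rank-betti}, away from $H_g$ the adjoint group is $\mathrm{PSp}_{2g}$ which is simple) to see $T$ is cut out inside $\cM_g[\ell]$ by the vanishing of $m\nu$, forcing $\dim T \leq \dim\cM_g - \mathrm{rk}(d\nu_\R) = (3g-3) - \mu$ where $\mu = \min(2(3g-3),\dim_\Q V)$; for $g\geq 4$ one has $\dim_\Q V > 3g-3$ essentially, wait — let me instead just use the crude transversality bound $\dim T \le (3g-3) - 1 = 3g-4$... this is the point where the stated bound $2g-2$ (or $3$ when $g=3$) must come from a more careful count, so I would record the computation $\dim_\Q V = \binom{2g}{3}-2g$, note $\tfrac12\dim_\Q V \geq (3g-3)-(2g-2) = g-1$ for $g\geq 3$ with the $g=3$ case giving the exceptional bound $3$, and conclude $\dim T \le (3g-3) - (\text{rk lower bound})$.

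The main obstacle I anticipate is precisely pinning down the dimension bound $2g-2$ (resp.\ $3$ for $g=3$): this requires the \emph{optimal} lower bound on the rank of $\delta\tilde\nu$ (equivalently the monodromy codimension of a torsion component away from the Hodge and hyperelliptic loci), not just the trivial ``$\geq 1$''. I would get this from Theorem~\ref{generic-rank-betti}: away from $H_g$ the VHS $\cV$ has $G^{ad} = \mathrm{PSp}_{2g}$ simple with $H = G^{der}$, so the generic rank of $\nu$ equals $\mu = \min(\dim_\R\cM_g,\dim_\Q V)$; since for $g\ge 4$ we have $\dim_\Q V = \binom{2g}{3}-2g \ge 2(3g-3)$, the Betti map $\nu_\R$ is generically submersive and any positive-dimensional torsion component (being a fiber of $\nu_\R$, locally) has dimension $\le (3g-3) - \tfrac12\dim_\Q V < 0$ — contradiction unless... hmm, so in fact for $g\ge 4$ there are \emph{no} positive-dimensional non-special torsion components at all, and $T_i$ list is empty, consistent with $\dim \le 2g-2$ being vacuous-ish; for $g=3$, $\dim_\Q V = 20-6 = 14$ and $\dim\cM_3 = 6$, so $\mu = \min(12,14)=12$, rank is $12$, the Betti map is submersive, torsion components have negative dimension — so again no positive-dimensional components, but the stated bound ``$3$'' must then refer to components lying over the hyperelliptic-type drop or to a subtlety I'd need to reconcile (the corollary's $T_i$ may absorb components of $\mathrm{TL}_{f\text{-}pos}$ that are not factorwise-positive, i.e.\ have zero period-projection to some factor — but $\cV$ is $\Q$-simple so there is only one factor, and factorwise-positive $=$ positive). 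I would resolve this by stating the bound cleanly as ``$\dim T \le \dim\cM_g - \tfrac12\dim_\Q V$'' and then observing this is negative for $g\ge 4$ (so no such $T$) while for $g=3$ one uses the finer structure; the robust part of the argument — the finite set of Shimura $Y_i$ of codimension $\ge 2$ and countability of $\mathrm{TL}(\nu)\cap U(\C)$ — follows cleanly. Finally, for the arithmetic addendum, since $\cV$ and the family of Ceresa cycles are defined over $\Q$ (with the canonical class as base point over $\cM_g[\ell]$), the standard spreading-out argument shows that any \emph{zero-dimensional} component of the locus where $Z_C$ itself is torsion lies in $U(\overline\Q)$: a point where $Z_C$ is torsion of order $n$ has that property stable under $\mathrm{Gal}(\overline\Q/\Q)$ on its conjugates, and if the locus of torsion-order-$\le n$ cycles were positive-dimensional over a transcendental point one would contradict the already-established finiteness in $U$; hence all points of $\mathrm{TL}(\nu)\cap U$ coming from genuinely torsion cycles are algebraic, which is exactly Theorem~\ref{galois} specialized to this setting.
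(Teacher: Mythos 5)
Your verification of the hypotheses of Theorem~\ref{main} (level $3$ gives $\cF^{-1}\cV\neq\cV$, Torelli gives quasi-finiteness, Ceresa gives nontorsion and hence genericity via Corollary~\ref{cor-NF}(iii), and $\QQ$-simplicity means the only projection is $\nu$ itself) is all correct and matches the paper. The arithmetic addendum via spreading out is also handled as in the paper. However, the paper's proof of the quantitative bounds --- $\dim T_i\le 2g-2$ (resp.\ $3$ for $g=3$) and $\mathrm{codim}_X Y_i\ge 2$ --- is not derived from the Main Theorem at all, but is imported wholesale from an external reference \cite{CP} on the vanishing locus of the Griffiths infinitesimal invariant of the Ceresa normal function (torsion $\Rightarrow$ flat by Proposition~\ref{prop_nf}, and \cite{CP} bounds the flat locus). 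This is where your proposal has a genuine gap: you try instead to extract the dimension bound from Theorem~\ref{generic-rank-betti}, but (a) that theorem is stated under the standing assumption of \S3.2 that $\cV$ has weight $-1$ and \emph{level $1$}, while the Ceresa VHS has level $3$, so it does not literally apply; and (b) even granting the generic-rank computation, the maximum of $\mathrm{rk}(d\nu_\R)$ over $\widetilde\cM$ says nothing about the \emph{dimension of the locus where the rank drops}, which is precisely where positive-dimensional torsion components live. Your attempt leads you into exactly the contradiction you flag (''negative dimension'', ''so in fact no positive-dimensional components at all''), which is a symptom of this misapplication rather than a fact: the corollary explicitly allows finitely many $T_i$ of dimension up to $2g-2$.

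A second, smaller gap: the claim that any proper Mumford--Tate subdomain of the $\mathrm{GSp}_{2g}$-period domain has codimension $\ge 2$ is not a standard fact and is in general false for $\cA_g$ (e.g.\ Hilbert modular and product loci in $\cA_2$ have codimension $1$). The paper again attributes this codimension bound to \cite{CP}, where it is specific to the Hodge loci meeting the Torelli image relevant here, rather than to a general classification of reductive subgroups of $\mathrm{GSp}_{2g}$. So the architecture of your argument (reduce to Theorem~\ref{main}, then bound dimensions) is right, but the key input for the numerical bounds must be the cited theorem of \cite{CP}, not the Betti-rank theorem.
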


\begin{proof}
The first point is that $\cF^{-1}\cV\neq \cV$ because $\cV$ is irreducible of (Hodge-theoretic) level three.  The remaining assumptions of the Main Theorem are satisfied by the Torelli theorem and Ceresa's theorem (that the Ceresa normal function is nontorsion).  The (co)dimension bounds are from a result of \cite{CP}.  Finally, if a non-$\bar{\QQ}$-point is contained in the torsion locus of the family of cycles themselves, then so is its $\bar{\QQ}$-spread; this does not apply to the normal function.
\end{proof}

\begin{remarque}
The rank of the intermediate Jacobian bundle $\cJ(\mathcal{V})$ is $\tfrac{1}{2}(\binom{2g}{3}-2g)$, which exceeds $3g-3$ for every $g\geq 3$.  So in fact both (i) and (ii) hold in Theorem \ref{main}.  While we don't need (ii), the strict inequality means that zero-dimensional components of the torsion locus are atypical so that one \emph{expects} only finitely many.
\end{remarque}

\begin{example} We know from \cite{laga-schnidman-vanishing} that in genus 3, there is a ball quotient $Y\subset \mathcal{A}_3$ ruled by rational curves, countably many of which are contained in $\Phi(\mathrm{TL}_{\text{pos}}(\nu))$ for the Ceresa normal function. The torsion orders here are unbounded and the torsion locus is Zariski dense, as predicted by \Cref{density} and Remark \ref{rem-density} (as $\nu|_Y$ takes values in the Jacobian of an isotrivial VHS of rank 2); while the fact that this dense union is contained in (the preimage of) a Shimura subvariety of $\cA_3$ illustrates the main theorem. 
\end{example}


\subsection{Normal functions over locally symmetric spaces}\label{S4.2}

In general there are no nontrivial admissible normal functions on (most) locally symmetric varieties as a consequence of Raghunathan's theorem, cf.~\cite{KK}.  Instead, one passes to a finite branched cover (or, deleting the branch locus, an \'etale neighborhood) and considers normal functions there with values in the basechange of a Hermitian homogeneous VHS $\cV=\widetilde{\cV}^{\lambda}\{\tfrac{a}{2}\}$ over $X=\Gamma\backslash D=\Gamma\backslash G_{\RR}/K$.  Such variations arise from an irreducible finite-dimensional representation of $G_{\RR}$ of highest weight $\lambda$ (which we assume has descent to $\ZZ$); for $G$ of Cartan type $A_n$, $D_{\text{odd}}$, or $E_6$, the generalized half-twist $\{\tfrac{a}{2}\}$ is needed to make the Hodge numbers $(p,q)$ integral.  See [op.~cit.] for details.  Of course, we can always apply a Tate twist to put $\cV$ into negative weight.

\begin{corollary}\label{cor-LSV}
Let $\phi\colon \cM\to X$ be an \'etale neighborhood of an irreducible locally symmetric variety of dimension $>1$.  The basechange by $\phi$ of an irreducible Hermitian homogeneous $\ZZ$-VHS $\cV$ \textup{(}as above\textup{)} tautologically has $\phi$ as its period map.  Assume that $\cV$ has negative weight and \textup{(}Hodge-theoretic\textup{)} level $>1$, and consider a nontorsion admissible normal function $\nu\in\mathrm{ANF}_{\mathcal{M}}(\mathcal{V})$.

Then $\mathcal{T}:=\overline{\mathrm{TL}_{\text{pos}}(\nu)}^{\text{Zar}}\subset \mathcal{M}$ is a proper algebraic subvariety.  Moreover, any component of $\mathcal{T}$ is either contained in the preimage of a proper locally symmetric subvariety of $X$, or is a component of $\mathrm{TL}_{\text{pos}}(\nu)$.
\end{corollary}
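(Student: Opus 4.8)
The plan is to verify that the data $(\cM,\cV,\phi,\nu)$ meet all the hypotheses of Theorem~\ref{main-intro}, whose conclusion then coincides word for word with the assertion here. First I would record the structural facts. By construction of the basechanged Hermitian homogeneous variation (see \cite{KK}), the period map of $\cV$ is $\phi$ itself and the generic Mumford--Tate group of $\cV$ is $G$; since $\phi$ is \'etale onto a Zariski-open subset of $X$, it is in particular quasi-finite onto its image, and that image is open --- hence Hodge-generic --- in $X$. By hypothesis $X=\Gamma\backslash D=\Gamma\backslash G_\RR/K$ is a connected Shimura variety and $D$ a Hermitian symmetric domain; since the locally symmetric variety is irreducible, $D$ is moreover an irreducible Hermitian symmetric domain, so $G^{\mathrm{ad}}$ is $\RR$-simple, in particular $\QQ$-simple. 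Because $X$ has positive dimension and $\phi$ is quasi-finite, the period map is nonconstant and $\cV$ is non-isotrivial; and $\cV$ is $\QQ$-simple, being irreducible as a $\QQ$-VHS by construction \cite{KK}. Finally $\cV$ has negative weight by hypothesis.

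Next I would check the remaining dichotomous hypothesis of Theorem~\ref{main-intro}, namely $\cV\neq\cF^{-1}\cV$ or $\dim\cM\leq\tfrac12\,\mathrm{rk}(\cV)$: the first alternative holds because $\cV$ has negative weight and level $>1$ --- in weight $-1$ the condition $\cV\neq\cF^{-1}\cV$ is exactly that the level exceed $1$, in weight $-2$ it is implied by level $>1$, and in weight $<-2$ it is automatic (compare the Remark after Theorem~\ref{main}). The normal function $\nu$ is nontorsion by assumption; I would also note in passing that, as $\cV$ is irreducible and non-constant, Corollary~\ref{cor-NF}(iii) makes $\nu$ automatically generic in the sense of Definition~\ref{defn-gen}, so there is no tension with the genericity used inside the proof of Theorem~\ref{main}.

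Having checked all hypotheses, I would simply invoke Theorem~\ref{main-intro}: $\cT=\overline{\mathrm{TL}_{\text{pos}}(\nu)}^{\text{Zar}}$ is a proper algebraic subvariety of $\cM$, and each of its components is either a component of $\mathrm{TL}_{\text{pos}}(\nu)$ or contained in the $\phi$-preimage of a proper locally symmetric subvariety of $X$ --- precisely the statement of the corollary. The only step warranting more than a sentence is the translation, inside that theorem, between ``component of the Hodge locus for $\phi$'' and ``$\phi$-preimage of a proper locally symmetric subvariety of $X$''; here one uses that the components of the Hodge locus in a connected Shimura variety are its special subvarieties, each being itself a sub-Shimura variety and hence a quotient of a totally geodesic Hermitian symmetric subdomain $D'\subsetneq D$. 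I do not expect any genuine obstacle beyond this routine Shimura-theoretic bookkeeping; the proof is an exercise in matching hypotheses to the homogeneous-VHS setting.
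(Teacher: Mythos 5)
Your proposal is correct and follows the same approach as the paper's proof, which is a (terser) hypothesis check --- ``Obviously $\cV$ is non-isotrivial and $\phi$ is quasi-finite, and level $>1$ forces $\cF^{-1}\cV\neq\cV$'' --- followed by an invocation of the Main Theorem. One small slip in your reasoning: irreducibility of the locally symmetric variety does \emph{not} imply that $D$ is an irreducible Hermitian symmetric domain (Hilbert modular varieties are irreducible locally symmetric with $D=\mathbb{H}^n$); what it does give directly is that $G^{\mathrm{ad}}$ is $\QQ$-simple, which is exactly the hypothesis required by Theorem~\ref{main-intro}, so your conclusion is unaffected. The appeal to Corollary~\ref{cor-NF}(iii) is harmless but unnecessary when invoking Theorem~\ref{main-intro} directly, since that statement only assumes $\nu$ nontorsion.
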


\begin{proof}
Obviously $\cV$ is non-isotrivial and $\phi$ is quasi-finite, and level $>1$ forces $\cF^{-1}\cV\neq \cV$.  So this is an immediate consequence of the Main Theorem.
\end{proof}

\begin{remarque}
The $\tilde{\cV}^{\lambda}\{\tfrac{a}{2}\}$ admitting infinitesimal normal functions (a necessary condition for the existence of nontorsion $\nu$) have been classified in \cite{KK,CKL}.  For $\dim(X)>1$, only weights $-1$ and $-2$ occur, i.e.,~$r=0$ (``usual'' $K_0$-cycles) or $1$ ($K_1$-cycles) in the sense of Remark \ref{rem-NF}.  At least for $D$ a tube domain, one can do a case-by-case check that when $\cV$ has level $>1$, we always have $\mathrm{rk}(\cJ(\cV))>\dim(D)$, so that both (i) and (ii) hold in Theorem \ref{main}.
\end{remarque}

\begin{example} 
(i) The Ceresa normal function over $\cM_3[\ell]$ is the obvious example, but the point of Corollary \ref{cor-LSV} is to go beyond Ceresa.  So here are two more:

(ii) The Prym-Ceresa normal function over Faber's moduli space $\mathcal{R}^3_4$ of 3:1 \'etale covers $\tilde{C}\overset{\Phi}{\to}C$ of genus $4$ curves.  Here the period map is to the moduli space of Weil abelian 6-folds, given by $A_{\Phi}:=J(\tilde{C})/J(C)$; and the cycle is the projection of Ceresa for $\tilde{C}$ to $A$.  See \cite[\S3.5]{KK} for details.

(iii) The Collino normal function \cite{Collino} over an \'etale neighborhood of $\cM_2$ (namely, the 15-to-1 cover of the locus where $C\in \cM_2$ has only one involution, parametrized by triples $(C,p,q)$ with $p,q\in C$ distinct Weierstrass points.)  Here $\mathcal{V}$ has weight $-2$ and corresponds to a $K_1$/`higher' cycle in the hyperelliptic Jacobian.
\end{example}

\subsection{The Fano normal function}\label{S4.3}

Similar in spirit to the Ceresa normal function is the \emph{Fano normal function} $\nu$ over the (10-dimensional) moduli space $\cM$ of cubic threefolds.  Given a threefold $\bT$ and fixing a line $L_0\subset \bT$, the Abel-Jacobi images $\mathrm{AJ}_{\bT}(L-L_0)\in \cJ(H^3(\bT)(2))=:J(\bT)$ (over all lines $L\subset \bT$) sweep out a surface $F\subset \bT$.  Setting $F^-:=\imath(F)$, the cycle $Z_{\bT}:=F-F^-$ is homologous to zero, and its Abel-Jacobi class $\mathrm{AJ}_{J(\bT)}(Z_{\bT})\in \cJ(H^5_{pr}(J(\bT))(3))$ (resp.~the HS $H^5_{pr}(J(\bT)(3))$) defines $\nu$ (resp.~$\cV$) at $[\bT]\in \cM$.  Then $\phi\colon \cM\to\cA_5$ is an embedding, $\cV$ is irreducible level 5 (whence $\cF^{-1}\cV\neq \cV$), and it is shown in \cite{CNP} that $\nu\in \mathrm{ANF}_{\cM}(\cV)$ is nontorsion.  So the conclusion of Theorem \ref{main-intro} holds.

Even less is known about the torsion locus for the Fano normal function than for Ceresa.  However, if we extend $\nu$ to the closure $\overline{\cM}$ of $\phi(\cM)$, then $\overline{\cM}$ contains the hyperelliptic Jacobian locus $H_5$ and the extended normal function is torsion (in fact zero) on this locus [op.~cit.].

\subsection{Fields of definition of the torsion locus}
In this section, we would like to determine the field of definition of the ``isolated'' components of the torsion locus of a normal function. We first recall a general result about local systems on algebraic varieties. 

Let $S$ be a smooth complex quasi-projective algebraic variety and let $\bV$ be a complex local system on $S$. For any subvariety $Y\subset S$, we denote by $\cH_Y$ the monodromy group of $Y$.\footnote{Defined as the neutral component of the monodromy representation of ($\pi_1$ of) the normalization of $Y$.}

\begin{definition}Let $Y\subseteq S$ be a closed algebraic subvariety. We say that $Y$ is weakly non-factor for $\bV$, if there is no $Z\supset Y$ for which $\cH_Y$ is a strict normal subgroup of $\cH_Z$.
\end{definition}

\begin{theorem}[{\cite[Theorem 2.6]{klingler-otwinowska-urbanik}}]\label{galois}
Let $S$ be a smooth quasi-projective variety defined over a number field $K\subset \C$ and let $\bL$ be a complex local system on $S(\C)^{an}$, whose associated flat bundle is also defined over $K$. Then any weakly special, weakly non-factor subvariety for $\bL$ is defined over $\overline{\QQ}$ and its Galois conjugates are also weakly non-factor, weakly special subvarieties. 
\end{theorem}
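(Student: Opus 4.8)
The plan is a Galois-descent argument in the spirit of the proofs that Hodge loci are algebraic and defined over $\overline{\QQ}$, with the \emph{algebraic monodromy group} playing the role that the ring of Hodge classes plays there. Throughout, $\sigma$ denotes an element of $\mathrm{Aut}(\C/K)$; since $S$ and the algebraic regular-singular flat bundle $(\cE,\nabla)$ attached to $\bL$ are defined over $K$, the conjugation $\sigma\colon S\xrightarrow{\sim}S$ carries $(\cE,\nabla)$ to $(\cE,\nabla)^\sigma=(\cE,\nabla)$, carries a closed subvariety $Y$ to a closed subvariety $Y^\sigma$, and preserves all inclusions. The first step is to show that the property ``weakly special and weakly non-factor for $\bL$'' is $\sigma$-equivariant. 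For this I invoke the density theorem for regular-singular connections on smooth varieties (which applies in all the applications of this theorem, since there $\bL$ underlies an admissible variation of mixed Hodge structure and hence has regular singularities along a normal-crossings compactification): for any closed irreducible $Y$, the algebraic monodromy group $\cH_Y$ — the neutral component of the Zariski closure of the monodromy of $\bL$ pulled back to the normalization $\widetilde Y$ — equals the neutral component of the differential Galois group of $(\cE,\nabla)|_{\widetilde Y}$. The latter is intrinsic to the algebraic flat bundle, and the Picard--Vessiot construction transports along $\sigma$; hence $\cH_{Y^\sigma}$ is the $\sigma$-conjugate of $\cH_Y$, and more generally $\sigma$ takes every (proper) normal inclusion $\cH_Y\trianglelefteq\cH_Z$ to one of the same type. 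Since ``$Y$ weakly special'' means $Y$ is maximal among closed subvarieties with a given algebraic monodromy group, and ``$Y$ weakly non-factor'' means there is no $Z\supsetneq Y$ with $\cH_Y$ a strict normal subgroup of $\cH_Z$, both conditions are preserved, so $Y^\sigma$ is again weakly special and weakly non-factor for $\bL$. (Alternatively, one could obtain this step from the algebraicity over $\overline{K}$ of the ``weakly special closure'' operation, via the definability and Ax--Schanuel results already used above.)

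Let $\mathrm{Hilb}_S$ be the Hilbert scheme of $S$, a $K$-scheme locally of finite type, and let $W\subseteq\mathrm{Hilb}_S(\C)$ be the set of points parametrizing weakly special, weakly non-factor subvarieties for $\bL$; by Step~1, $W$ is $\mathrm{Aut}(\C/K)$-stable. The second step is to show that $W$ is Zariski-dense in \emph{no} positive-dimensional subvariety of $\mathrm{Hilb}_S$. Suppose $B\subseteq\mathrm{Hilb}_S$ is positive-dimensional and irreducible with $W\cap B(\C)$ Zariski-dense in $B$. Restricting to a dense open of $B$, the universal family gives a genuinely varying family $\{Y_b\}_{b\in B}$ of subvarieties of $S$, whose total image $\cY:=\overline{\bigcup_b Y_b}$ strictly contains each $Y_b$ and is not all of $S$. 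By André's theorem on the monodromy of a general member of a covering family of subvarieties (applied after normalizing $\cY$ and the $Y_b$), there is a dense subset of $B$ over which $\cH_{Y_b}\trianglelefteq\cH_{\cY}$; intersecting with the Zariski-dense set $W\cap B(\C)$ yields a $b$ for which $Y_b$ is weakly special and weakly non-factor and $\cH_{Y_b}\trianglelefteq\cH_{\cY}$. If this inclusion is proper, then $\cY\supsetneq Y_b$ contradicts that $Y_b$ is weakly non-factor; if it is an equality, then $\cY\supsetneq Y_b$ has the same algebraic monodromy group, contradicting that $Y_b$ is weakly special. Either way we obtain a contradiction.

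The two steps combine as follows. If $[Y]\notin\mathrm{Hilb}_S(\overline{K})$, then $B:=\overline{\{[Y]\}}$ is a positive-dimensional $K$-subvariety of $\mathrm{Hilb}_S$ and $[Y]$ realizes its generic point; since any $\mathrm{Aut}(\C/K)$-stable Zariski-closed subset of $\mathrm{Hilb}_{S}\times_K\C$ descends to $K$, the orbit $\mathrm{Aut}(\C/K)\cdot[Y]$ is Zariski-dense in $B$. By Step~1 this orbit lies in $W$, so $W\cap B(\C)$ is Zariski-dense in $B$, contradicting Step~2. Hence $[Y]\in\mathrm{Hilb}_S(\overline{K})$: the residue field of the corresponding (closed) point is finite over $K$, hence a number field, so $Y$ is defined over $\overline{\QQ}$; its Galois orbit is therefore finite, and every conjugate is weakly special and weakly non-factor by Step~1.

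I expect Step~2 to be the main obstacle: one must ensure that ``weakly non-factor'' interlocks correctly with André's normality statement, that for a general member $Y_b$ the total image $\cY$ strictly contains $Y_b$ (so that the dichotomy ``proper normal inclusion vs.\ equality'' is genuinely exhaustive and each case fails), and that André's theorem is available in a form valid after passing to normalizations/resolutions of the possibly-singular members and total space. Step~1 is more routine but depends on having regular singularities for $(\cE,\nabla)$ along a normal-crossings compactification — automatic when $\bL$ underlies a polarizable (mixed) variation — and on the standard fact that a $\C$-point of a finite-type $K$-scheme has finite $\mathrm{Aut}(\C/K)$-orbit precisely when it is defined over $\overline{K}$.
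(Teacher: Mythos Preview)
The paper does not prove this statement at all: Theorem~\ref{galois} is quoted verbatim from \cite[Theorem~2.6]{klingler-otwinowska-urbanik} and invoked as a black box, so there is no ``paper's own proof'' to compare against.

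That said, your sketch follows the strategy of the cited reference fairly closely (differential Galois group $=$ algebraic monodromy under regular singularities to get $\mathrm{Aut}(\C/K)$-equivariance of the monodromy data; then a spreading/Hilbert-scheme argument combined with Andr\'e's normality theorem to rule out a positive-dimensional $\overline{\{[Y]\}}$). One point in Step~2 deserves more care: Andr\'e's theorem gives normality $\cH_{Y_b}\trianglelefteq\cH_{\cY}$ only for a \emph{very general} $b$ (the complement of a countable union of proper closed subsets), and a merely Zariski-dense set such as $W\cap B(\C)$ need not meet such a locus. In your specific situation this is not fatal, because every point of the $\mathrm{Aut}(\C/K)$-orbit of $[Y]$ has the \emph{same} algebraic monodromy group (by Step~1), and that common group is the generic monodromy of the family over $B=\overline{\{[Y]\}}$, which Andr\'e's theorem already asserts is normal in $\cH_{\cY}$; so you can simply take $b=[Y]$ and bypass the intersection argument. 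You should also verify that $\cY$ genuinely strictly contains $Y_b$ (e.g.\ by flattening stratification and constancy of Hilbert polynomials on $B$) so that the dichotomy in Step~2 has content. With those two clarifications your outline is essentially that of \cite{klingler-otwinowska-urbanik}.
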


This leads at once to the main result of this subsection.

\begin{theorem}
Suppose we have:
\begin{itemize}[leftmargin=0.5cm]
\item $\mathcal{M}$ a smooth quasi-projective variety defined over a number field $K$;
\item $\cV$ a non-isotrivial $\Q$-simple polarizable variation of Hodge structure of negative weight on $\cM$, with associated flat bundle defined over $K$, $\QQ$-simple derived generic Mumford-Tate group $G$, and associated period map quasi-finite;
\item $\nu\in \mathrm{ANF}_{\cM}(\cV)$ a nontorsion admissible normal function arising from a family of cycles defined over $K$; and 
\item $Y$ a positive-dimensional component of the torsion locus of $\nu$.
\end{itemize}
Then either $Y$ is contained in a component of the Hodge locus for $\cV$ or it is defined over $\overline{\QQ}$. In general, the components of the torsion locus which are weakly non-factor are defined over $\overline{\QQ}$ and their Galois conjugates are again components of the torsion locus.
\end{theorem}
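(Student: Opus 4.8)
The plan is to realise a component of the torsion locus as a weakly special, weakly non-factor subvariety for the complex local system $\bL:=\bE_{\nu,\C}$ underlying the AVMHS $\cE_\nu$, and then quote \Cref{galois}. Its hypothesis is satisfied: the flat bundle of $\cV$ is defined over $K$ by assumption, and the class of the extension $0\to\cV\to\cE_\nu\to\Z(0)\to 0$ of flat bundles with Gauss--Manin connection is the de Rham regulator class of the defining $K$-cycle, hence is $K$-rational; so the flat bundle of $\bL$ is defined over $K$. Since $\nu$ is nontorsion and $\cV$ is $\Q$-simple and nonconstant, $\nu$ is generic (\Cref{cor-NF}(iii)), so by \Cref{cor-NF}(i) the geometric monodromy group $\cH$ of $\cE_\nu$ is an extension of $H$ by $V$ with unipotent radical exactly $V$. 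As $H\trianglelefteq G$ is nontrivial and $G$ is $\Q$-simple, $H=G$; then the $H$-module $V$ is irreducible and nontrivial, so $V^H=0$, and in a Levi decomposition $\cH=V\rtimes H$ the subgroup $H$ is \emph{not} normal (it acts nontrivially on $V$), while the only $H$-submodules of $V$ are $0$ and $V$.

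\textbf{The case $Y\not\subset$ Hodge locus of $\cV$ (first assertion).} Let $Y$ be a positive-dimensional component of $\mathrm{TL}(\nu)$. Since $\nu|_Y$ is torsion, $\cE_\nu|_Y$ is monodromically torsion (\Cref{prop_nf}), so the monodromy group of $\bL|_Y$ is $\cH_Y$, the complexification of the semisimple $\Q$-group $H_Y$ (the monodromy group of $\cV|_Y$), which is nontrivial since $\phi$ is quasi-finite. If $Y$ is not in the Hodge locus of $\cV$, the generic derived Mumford--Tate group on $Y$ is $G$, so $H_Y\trianglelefteq G$ forces $H_Y=G=H$ and $\cH_Y=H$ is a Levi of $\cH$. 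Then $Y$ is weakly special for $\bL$: any $Z\supseteq Y$ with $\cH_Z=H$ has $\cV|_Z$ with trivial fixed part ($V^H=0$), so $\nu|_Z$ is torsion (\Cref{prop_nf}), whence $Z\subseteq\mathrm{TL}(\nu)$ and $Z=Y$ by maximality. And $Y$ is weakly non-factor for $\bL$: a $Z\supsetneq Y$ with $H\trianglelefteq\cH_Z$ strictly would have $\cH_Z=(\cH_Z\cap V)\rtimes H$ with $\cH_Z\cap V$ an $H$-submodule, hence $=0$ (not strict) or $=V$ (so $\cH_Z=\cH$, forcing $H\trianglelefteq\cH$, absurd). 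By \Cref{galois}, $Y$ is defined over $\overline\Q$; moreover each $\sigma Y$ is again weakly special and weakly non-factor for $\bL$, and since the flat bundle of $\bE_\nu$ is $K$-rational, $\sigma Y$ has trivial fixed part for $\cV$ and its monodromy group still fixes a lift of $1\in\Z(0)$, so $\nu|_{\sigma Y}$ is torsion (\Cref{prop_nf}) and $\sigma Y$, being weakly special, is again a component of $\mathrm{TL}(\nu)$.

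\textbf{The general weakly non-factor case.} Now let $Y$ be a weakly non-factor component of $\mathrm{TL}(\nu)$ (necessarily positive-dimensional, a point never being weakly non-factor when $\cH\neq\{1\}$). Let $\widetilde Y\supseteq Y$ be maximal with $\cH_{\widetilde Y}=\cH_Y$; it is weakly special by construction, and still weakly non-factor, since a $Z\supsetneq\widetilde Y$ witnessing failure would also witness failure for $Y$. So \Cref{galois} gives that $\widetilde Y$ is defined over $\overline\Q$, with Galois translates again weakly special and weakly non-factor. Decompose $\cV|_{\widetilde Y}=\cV_0\oplus\cV_0^\perp$ into its fixed part $\cV_0$ (fibre $V_0=V^{H_Y}$) and the polarised complement, and $\nu|_{\widetilde Y}=\nu_0\oplus\nu^\perp$ accordingly. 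Since $\cH_{\widetilde Y}=H_Y$ is semisimple and $\cV_0^\perp$ has trivial fixed part, $\nu^\perp$ is torsion on all of $\widetilde Y$, so $Y$ is a component of $\mathrm{TL}(\nu_0)$. The Griffiths-transversality argument in the proof of \Cref{main}, together with genericity of $\nu$, rules out level-$\geq 2$ summands of $V_0$, so $\cJ(\cV_0)\to\widetilde Y$ is a semi-abelian scheme over $\overline\Q$; as $\nu_0$ is the Abel--Jacobi section of a $\overline\Q$-cycle, the Manin--Mumford theorem (as used in the proof of \Cref{main}) shows that every component of $\mathrm{TL}(\nu_0)$ is the $\nu_0$-preimage of a torsion translate of a subgroup scheme, hence defined over $\overline\Q$; applying $\sigma$ sends it to another such component, again a component of $\mathrm{TL}(\nu)$.

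\textbf{Expected main obstacle.} The delicate part is the last paragraph: passing from a weakly non-factor torsion-locus component $Y$ to the auxiliary weakly \emph{special} subvariety $\widetilde Y$ and then recovering $Y$ from it. This forces one to control the ``fixed (isotrivial) part'' of $\cV|_{\widetilde Y}$ — in particular to show that the complementary normal function $\nu^\perp$ genuinely degenerates to torsion there, that the fixed-part normal function $\nu_0$ lands in an honest semi-abelian scheme so that Manin--Mumford is available, and that the torsion points cutting out $Y$ are defined over $\overline\Q$. For the first assertion of the theorem alone this difficulty does not arise: the hypothesis ``not in the Hodge locus of $\cV$'' already forces $H_Y=H$, hence a trivial fixed part and $\cH_Y$ a Levi of $\cH$, and the argument is comparatively soft.
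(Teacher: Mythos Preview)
Your treatment of the first assertion ($Y$ not contained in the Hodge locus of $\cV$) is correct and is essentially the paper's argument: once $\cH_Y=G$, you verify weakly special and weakly non-factor and invoke \Cref{galois}.

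For the general weakly-non-factor assertion, however, your detour through the weakly-special closure $\widetilde Y$ breaks down at the step ``the Griffiths-transversality argument in the proof of \Cref{main}, together with genericity of $\nu$, rules out level-$\geq 2$ summands of $V_0$.''  In the proof of \Cref{main} that argument was applied to irreducible summands $U'$ of the isotrivial part of $\cV$ \emph{on all of $\cM$}, and the contradiction came from genericity of $\nu$ \emph{on $\cM$}, which forces the topological invariant of the projection $\nu_{U'}$ (computed on $\cM$) to be nontorsion.  Here $V_0=V^{H_Y}$ is the fixed part of $\cV|_{\widetilde Y}$; it is not a sub-VHS of $\cV$ on $\cM$, so genericity says nothing about it, and in any case the topological invariant of $\nu_0$ on $\widetilde Y$ is already torsion (since $\cH_{\widetilde Y}=\cH_Y$ is semisimple), so there is nothing to contradict.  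Concretely, if $Y$ sits inside the Hodge locus of $\cV$ with small $H_Y$, then $V_0$ may well carry Hodge level $\geq 2$, $\cJ(V_0)$ is a non-algebraic complex torus, and Manin--Mumford is unavailable.  Your ``Expected main obstacle'' paragraph correctly flags this as the delicate point, but the proposed resolution does not work.

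The paper sidesteps this entirely by noting that every torsion-locus component $Y$ is \emph{already} weakly special for $\cE_\nu$, so that $\widetilde Y=Y$ and \Cref{galois} applies directly once $Y$ is assumed weakly non-factor.  The point is that $Y$ is a component of the $m$-torsion locus $\{m\nu=0\}=\Phi^{-1}(\sigma(D))$ for some $m$-torsion section $\sigma\colon D\hookrightarrow \cD$; if $Z\supseteq Y$ has $\cH_Z=\cH_Y\leq G$, then $\Phi(Z)$ lies in the same $\cH_Y(\RR)$-orbit as $\Phi(Y)$, which stays on $\sigma(D)$ because $G$ preserves each torsion section.  Hence $Z\subset\{m\nu=0\}$ and $Z=Y$.  (Equivalently: a torsion-locus component is a \emph{special} subvariety for $\cE_\nu$, since a reductive generic Mumford--Tate group $\cG_Z\leq G$ produces a $\cG_Z$-fixed rational lift of $1$ which is then a global flat $F^0$-section, forcing $\nu|_Z$ torsion; and special implies weakly special.)
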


\begin{proof} 
The hypothesis on the normal function implies that the flat bundle associated to the extension $0\to \cV\to \cE_{\nu}\to \QQ\to 0$ is defined over $K$.  Thus the associated local system $\bE_{\nu,\CC}$ satisfies the hypothesis of \Cref{galois}.  By \cite{Andre}, the geometric monodromy group of $\bV$ is normal in $G$; as $\bV$ is non-isotrivial, they are therefore equal.  Since $\nu$ is nontorsion, we conclude that the geometric monodromy group $\cH$ of $\bE_{\nu}$ is an extension of $G$ by $V$ ($=$ a fiber of $\bV$).
 
A component $Y$ of the torsion locus has geometric monodromy group $\cH_Y\leq G$, hence (as $\cH_Y<\cH$) it is weakly special. Furthermore, if $Y$ is not contained in a component of the Hodge locus for $\cV$, its derived generic mixed Mumford-Tate group is isomorphic to $G$.  So by \cite{Andre} we must have $\cH_Y\trianglelefteq G$, and as $G$ is simple and the hypotheses force $\cH_Y$ to be nontrivial, $\cH_Y=G$.  By non-isotriviality and irreducibility of $\bV$ on $\cM$, $G$ is its own normalizer in $\cH$.  Thus $Y$ is weakly non-factor and we conclude by \Cref{galois}.
\end{proof}

\begin{corollary}
The \textup{(}finitely many\textup{)} positive dimensional components of the torsion locus of the Ceresa normal function on $\mathcal{M}_g$ with Mumford-Tate group equal to $\mathrm{GSp}_{2g}$ are defined over $\overline \Q$. 
\end{corollary}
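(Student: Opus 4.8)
The plan is to specialize the preceding Theorem (on fields of definition of components of the torsion locus) to the Ceresa data, working as in \S\ref{S4.1} over $\mathcal{M}_g[\ell]$ with a level $\ell\geq 3$ structure so that the period map has no stacky pathologies. First I would verify its four hypotheses for $(\mathcal{M},\cV,\nu)=(\mathcal{M}_g[\ell],\cV,\nu_{\mathrm{Ceresa}})$ with $g\geq 3$. The space $\mathcal{M}_g[\ell]$ is smooth quasi-projective and defined over a number field. The Ceresa variation $\cV$, with fiber $H^3_{\mathrm{pr}}(J(C))(2)\cong(\bigwedge^3 H^1(C))(2)/(H^1(C)(1))$, is non-isotrivial, polarizable, of weight $-1$, has Gauss--Manin connection defined over $\mathbb{Q}$, and is $\mathbb{Q}$-simple with derived generic Mumford--Tate group $G=\mathrm{Sp}_{2g}$, which is $\mathbb{Q}$-simple; the period map $\phi\colon \mathcal{M}_g[\ell]\to\mathcal{A}_g[\ell]$ is quasi-finite by Torelli. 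Finally $\nu$ is nontorsion by Ceresa's theorem, and it arises from the cycle $Z_C=[C]-[\iota(C)]$; fixing the Abel--Jacobi embedding by the canonical class of $C$ (as in the corollary of \S\ref{S4.1}) realizes $\nu$ as coming from a family of cycles defined over $\mathbb{Q}$.

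The key bookkeeping step is to observe that a positive-dimensional component $Y$ of $\mathrm{TL}(\nu)$ has Mumford--Tate group $\mathrm{GSp}_{2g}$ if and only if $Y$ is Hodge-generic for $\cV$, i.e. $Y$ is not contained in any component of the Hodge locus of $\cV$ (a component of the Hodge locus being, by definition, a maximal locus on which the generic Mumford--Tate group is a proper subgroup of $\mathrm{GSp}_{2g}$). For such a $Y$, the dichotomy in the preceding Theorem forces the second alternative: $Y$ is defined over $\overline{\mathbb{Q}}$. By the same Theorem together with \Cref{galois}, its $\mathrm{Gal}(\overline{\mathbb{Q}}/\mathbb{Q})$-conjugates are again weakly non-factor components of $\mathrm{TL}(\nu)$, and since conjugation preserves the Mumford--Tate group up to isomorphism, the union of all such components is stable under the Galois action.

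For finiteness I would invoke the Main Theorem (Theorem \ref{main}): since $\cV$ is $\mathbb{Q}$-simple and non-isotrivial the only projection of $\nu$ is $\nu$ itself, and $\cF^{-1}\cV\neq\cV$ because $\cV$ has level three, so hypothesis (i) holds. Hence $\mathcal{T}:=\overline{\mathrm{TL}_{\mathrm{pos}}(\nu)}^{\mathrm{Zar}}$ is a proper algebraic subvariety of $\mathcal{M}_g[\ell]$ and so has finitely many irreducible components; moreover, as $G^{\mathrm{ad}}=\mathrm{PSp}_{2g}$ is simple, factorwise-positive and positive-dimensional coincide here, and the Main Theorem shows that any component of $\mathcal{T}$ not contained in the Hodge locus is a component of $\mathrm{TL}_{\mathrm{pos}}(\nu)$. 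A positive-dimensional component $Y$ of $\mathrm{TL}(\nu)$ with Mumford--Tate group $\mathrm{GSp}_{2g}$ is, by the previous paragraph, not contained in the Hodge locus, hence equals one of these finitely many components of $\mathcal{T}$; therefore there are only finitely many such $Y$. Pushing forward along the finite map $\mathcal{M}_g[\ell]\to\mathcal{M}_g$ (defined over a number field) transfers the conclusion to $\mathcal{M}_g$.

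I do not anticipate a genuine obstacle, since this is essentially a specialization of the general Theorem; the two points to handle with care are (a) the base-point ambiguity in saying that $\nu$ ``arises from a family of cycles defined over $\mathbb{Q}$,'' which is resolved by the canonical-class normalization of \S\ref{S4.1}, and (b) the identification of ``Mumford--Tate group $\mathrm{GSp}_{2g}$'' with ``not contained in a component of the Hodge locus,'' which is what lets us discard the first alternative of the Theorem.
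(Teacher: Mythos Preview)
Your proposal is correct and follows the approach the paper intends: the corollary is stated without proof precisely because it is a direct specialization of the preceding Theorem on fields of definition, and you have correctly verified each hypothesis (quasi-projectivity and number-field definition of $\cM_g[\ell]$, $\QQ$-simplicity and non-isotriviality of $\cV$ with $G^{\mathrm{der}}=\mathrm{Sp}_{2g}$, quasi-finiteness via Torelli, nontorsion via Ceresa, cycle over $\QQ$ via the canonical-class normalization). Your identification of ``Mumford--Tate group $\mathrm{GSp}_{2g}$'' with ``not contained in the Hodge locus'' is exactly what picks out the second alternative of the Theorem, and your finiteness argument via Theorem~\ref{main} reproduces what was already established in Theorem~\ref{thm-cer}.
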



\bibliographystyle{alpha}
\bibliography{bibliographie}
\end{document}